\newtheorem{thm}{Theorem}[section]
\newtheorem{cor}{Corollary}[section]
\newtheorem{prop}{Proposition}[section]
\newtheorem{defi}{Definition}[section]
\newtheorem{rem}{Remark}[section]
\newtheorem{exa}{Example}[section]
 \let\c@cor\c@thm
 \let\c@lem\c@thm
 \let\c@prop\c@thm
 \let\c@defi\c@thm
 \let\c@rem\c@thm
 \let\c@exa\c@thm
\def \cA{{\mathcal A}}
\def \cB{{\mathcal B}}
\def \cC{{\mathcal C}}
\def \cD{{\mathcal D}}
\def \cM{{\mathcal M}}
\def \cR{{\mathcal R}}
\def \cU{{\mathcal U}}
\def \B{\mathbb{B}}
\def \C{\mathbb{C}}
\def \D{\mathbb{D}}
\def \L{\mathbb{L}}
\def \M{\mathbb{M}}
\def \P{\mathbb{P}}
\def \R{\mathbb{R}}
\def \S{\mathbb{S}}
\begin{document}

\title{The manifold of polygons degenerated\\to segments}
\date{}
\author[1]{Manuel A. Espinosa-Garc\'ia \thanks{esgama@matmor.unam.mx}}
\author[2]{Ahtziri Gonz\'alez \thanks{ahtziri.lemus@umich.mx}}
\author[3]{Yesenia Villicaña-Molina \thanks{yesenia.villicana.molina@gmail.com}}
\affil[1]{\small Posgrado Conjunto en Ciencias Matem\'aticas, Universidad Michoacana de San Nicol\'as de Hidalgo - Universidad Nacional Aut\'onoma de M\'exico}
\affil[2]{Facultad de Ingenier\'ia El\'ectrica, Universidad Michoacana de San Nicol\'as de Hidalgo}
\affil[3]{Centro de Ciencias Matem\'aticas, Universidad Nacional Autónoma de México}
\maketitle

\vspace{-0.5in}

\abstract{In this paper we study the space $\L(n)$ of $n$-gons in the plane degenerated to segments. We prove that this space is a smooth real submanifold of $\C^n$, and describe its topology in terms of the manifold $\M(n)$ of $n$-gons degenerated to segments and with the first vertex at 0. We show that $\M(n)$ and $\L(n)$ contain straight lines that form a basis of directions in each one of their tangent spaces, and we compute the geodesic equations in these manifolds. Finally, the quotient of $\L(n)$ by the diagonal action of the affine complex group and the re-enumeration of the vertices is described.}

\bigskip
\noindent\textit{\textbf{Keywords:} spaces of polygons, degenerated polygons, ruled manifolds, orbifold lens space.}
\medskip

\noindent\textbf{2020 Math. Sub. Class.} \textit{Primary} 53A07, 53C22; \textit{Secondary} 53C15, 53C40.

\setlength{\parindent}{0cm}
\setlength{\parskip}{5pt}
\section{Introduction}
\label{int}

The spaces of polygons have been studied in many areas of mathematics, due to the implications and properties they hide. Many difficulties in different topics are reflected in these spaces. We mention some examples of this: in algebraic geometry, the difference between fine and coarse moduli spaces, and many phenomena from its generalization to algebraic stacks are reflected in spaces of triangles \cite{BEH}; in differential geometry, the existence or non-existence of global regularizing flows in simple or convex polygons \cite{ChowGli, CONN, SmBrouFra}; in topology, the compactification of moduli spaces of polygons \cite{HauKnu, KaMil1, KaMil2}. On the other hand, the study of geometric and topological properties have applications in areas like robotic motion, and pattern recognition \cite{Gott, Loz}.

Let $n\geq 3$. If the point $(z_1,z_2,\dots,z_n)\in\C^n$ is thought as the $n$-gon with consecutive vertices at $z_1,z_2,\dots,z_n\in\C$, then the set of $n$-gons in the plane with labeled vertices is equipped with the topology of $\C^n$. Here we study $n$-gons degenerated to a segment.

\begin{defi}
\label{n-segments}
A point in $\C^n$ is called an $n$-segment if all its vertices are collinear. $\L(n)\subset\C^n$ denotes the set of $n$-segments.
\end{defi}

An element in $\L(n)\subset\C^n$ defines a closed segment in $\C$ with $n$ labeled points on it. The $n$-segments appear in several investigations, for example:

\begin{itemize}
\item In \cite{AHT-MAN} it is proved that $\L(n)$ is contained in the boundary of the space of simple polygons, \textit{i.e.}, polygons without self-intersections. In particular, in the $n$-segments there is a bad asymptotic behavior that generates difficulties to study the boundary of the space of simple polygons \cite{AHT-JO}. The $n$-segments obtained as limits of convex polygons are called convex. In \cite{AHT-MAN} it is also shown that the convex $n$-segments are the only polygons that are limits of positively and negatively oriented convex polygons.

\item Let $\mathrm{r}=(r_1,\dots,r_n)\in\R^n$ be a vector with positive coordinates. Then $X(\mathrm{r})$ denotes the space of simple $n$-gons $(z_1,\dots,z_n)$ with sides lenght determined by $r$, \textit{i.e.}, $|z_{j+1}-z_j|=r_j$, for all $1\leq j\leq n$ (where $z_{n+1}=z_1$), modulo translations and rotations. Elements in $X(\mathrm{r})$ are called {\it linkages} and they are very studied in different areas \cite{BIEDL, CANTA, CONN, SHI-WOO}. A conjecture mentioned in \cite{CONN} says that the closure of $X(\mathrm{r})\subset\C^n$ is always contractible. The first counterexample is $X(6,4,2,4)$ and it was presented in \cite{SHI-WOO}. This is a counterexample since it contains a 4-segment. Actually, in \cite{AHT-MAN} it is proved that for $n=4$ all the counterexamples must contain a non-convex 4-segment.

\item Let $\mathrm{r}=(r_1,\dots,r_n)\in\R^n$ be a vector with positive coordinates such that $r_1+\cdots+r_n=1$. In \cite{KaMil1} Kapovich and Millson studied the topology of $M_\mathrm{r}$, the space of $n$-gons with sides lengths determined by $\mathrm{r}$, modulo translations and rotations (unlike $X(\mathrm{r})$, $M_\mathrm{r}$ includes the not simple $n$-gons). It is clear that in $M_\mathrm{r}$ there is an $n$-segment if and only if the coordinates of $\mathrm{r}$ satisfy$$\sum_{j=1}^n(-1)^{u_j}r_j=0\quad\text{for some } u_j\in\{0,1\}.$$The authors prove that $M_\mathrm{r}$ is a smooth manifold if and only if $\mathrm{r}$ does not satisfy the previous equation. Also, if $\mathrm{r}$ and $\mathrm{r}'$ are such that the closed segment between them does not contain a point that satisfies the previous equation, then the manifolds $M_\mathrm{r}$ and $M_{\mathrm{r}'}$ are diffeomorphic. That is, the topology changes precisely in the spaces $M_\mathrm{r}$ that contain $n$-segments. 

\item In \cite{KaMil2} Kapovich and Millson studied $n$-gons in $\R^3$ with side lengths determined by $\mathrm{r}=(r_1,\dots,r_n)$ as before. Two polygons are considered equivalent if they differ by an isometry of $\R^3$ and $\cM_\mathrm{r}$ denotes the corresponding moduli space. As in the previous item, $\cM_\mathrm{r}$ is a smooth manifold if and only if does not contain $n$-segments. In this case $\cM_\mathrm{r}$ is a complex analytic manifold with isolated singularities at the $n$-segments. Also, $\cM_\mathrm{r}$ is isomorphic to a weighted symplectic quotient of $(\S^2)^n$ with cusp points at the $n$-segments.
\end{itemize}

These examples show that the $n$-segments generate difficulties in the study of different features of spaces of polygons.

In this paper, we exhibit that $\L(n)\subset\C^n$ has interesting geometrical properties by itself. For example, if $V_n=\{(0,z_2,\dots,z_n)\in\C^n\}$ and $\M(n):=\L(n)\cap V_n$, then $\L(n)\subset\C^n$ is foliated by isometric copies of $\M(n)$ as follows
\[\L(n)=\bigcup_{b\in\C}\Big\{\M(n)+(b,b,\dots,b)\Big\}.\]
It turns out that $\M(n)$ and $\L(n)$ contain a big part of their tangent spaces, in particular, in each tangent space, $\M(n)$ contains a basis of $n$ perpendicular straight lines (Theorem \ref{Teorema_reglado}), and $\L(n)$ contains a basis of $n+2$ straight lines (Theorem \ref{Teorema_reglado2}). This contrasts with the case of surfaces in $\R^3$, where the only three-ruled surface is the plane \cite[Chap. 16]{FD-TS}. From these properties, it is striking that in $\M(n)$ and $\L(n)$ is complicated to find geodesics other than straight lines.

In addition to the space of $n$-segments, in this paper we are going to describe the space of shapes of $n$-segments without labels at the vertices that we define below.

\subsection{Shapes of $n$-segments and shapes without labelled vertices}\label{1A}

Let $\cA_{\C}:=\{f(z)=az+b\colon a\in\C^*:=\C\smallsetminus\{0\},\,b\in\C\}$ and $\cD_n:=\{(b,\dots,b)\in\C^n\}$ denote the Complex Affine Group and the diagonal in $\mathbb{C}^{n}$, respectively. Consider the action of $\cA_{\C}$ in $\C^n$ defined by
\[(az+b,(z_1,\dots,z_n))=(az+b,Z)\longmapsto aZ+\vec{b}=(az_1+b,\dots,az_n+b),\]
where $\vec{b}:=(b,\dots,b)\in\cD_n$. We say that two $n$-gons $Z$ and $W$ are equivalent if there exists $az+b\in \mathcal{A}_{\mathbb{C}}$ such that $(az+b,Z)\mapsto W$, \textit{i.e.} they have the same shape since they differ by a composition of translation, rotation and homothecy. Notice that two polygons in the subspace $V_n$ have the same shape if and only if they differ by a non-zero complex factor. The quotient$$P(n):=\left(\C^n\smallsetminus\cD_n\right)\big/\cA_{\C}\cong\P_{\C}V_n\cong\C\P^{n-2}$$is called the \textit{space of shapes of $n$-gons with labelled vertices}. The natural projection is denoted by $\eta\colon\C^n\smallsetminus\cD_n\to P(n)$. Notice that if $S^{2n-3}\subset V_n$ denotes the unitary sphere, then the restriction $\eta\colon S^{2n-3}\to P(n)$ is the Hopf fibration.

Two $n$-segments in $V_n \cap \mathbb{R}^n =\{(0,x_2,\dots,x_n)\colon x_j \in\R\}$ are equivalent if and only if they differ by a non-zero real factor. Then $\eta(\L(n))\subset P(n)$ is a copy of the projective space $\R\P^{n-2} \cong \S^{n-2}\big/\{X,-X\}$, with $\S^{n-2}\subset V_n \cap \mathbb{R}^n$ the unitary sphere. 

The linear isomorphism $M(z_1,z_2,\dots,z_n)=(z_2,\dots,z_n,z_1)$ permutes the vertices preserving the cyclic order. It is clear that two $n$-gons $Z,W\in\C^n$ are equivalent under the action of $\cA_{\C}$ if and only if $M(Z)$ and $M(W)$ are equivalent. Then, there is a diffeomorphism $\mu\colon P(n)\to P(n)$, such that $\mu\circ\eta=\eta\circ M$. The elements in the quotient $P(n)\big/\langle\mu\rangle$ are shapes of $n$-gons without labeled vertices.

\begin{defi}
\label{withoutlabels}
The quotient $\mathscr{L}(n):=\eta(\L(n))/\langle\mu\rangle$ is the space of shapes of $n$-segments without labelled vertices.
\end{defi}

The elements of $\mathscr{L}(n)$ can be thought as $n$-segments with vertices arranged in a certain pattern but with no labels on them.

\begin{exa}
\label{eje1}
The space of shapes of triangles $P(3)$ is diffeomorphic to $\C\P^1\cong\S^2$, and $\eta\big(\L(3)\big)=\big\{\eta\big((0,1,x)\big)\in P(3)\colon x\in\R\cup\{\infty\}\big\}$ is a copy of the circle $\S^1$. The space $\mathscr{L}(3)=\eta\big(\L(3)\big)/\langle\mu\rangle$ is obtained through the quotient$$\Big\{\eta\big((0,1,x)\big)\in P(3)\colon x\in[0,1]\Big\}\Big/\Big\{\eta\big((0,1,0)\big),\eta\big((0,1,1)\big)\Big\},$$and it is also diffeomorphic to the circle $\S^1$.
\end{exa}

The topics discussed in this article are structured as follows: In Section \ref{sec-2} we describe the topology of the smooth manifolds $\M(n)$ and $\L(n)$. Section \ref{sec-3} is devoted to show that $\M(n)$ and $\L(n)$ contain a big part of their tangent spaces in each point. This is a curious property that makes these manifolds more than ruled submanifolds. In Section \ref{sec-5} we calculate the equations that determine the geodesics in $\mathbb{L}(n)$ and $\mathbb{M}(n)$, it turns out that it is very difficult to compute geodesics other than straight lines. Finally, in Section \ref{sec-6} we prove that $\mathscr{L}(n)$ is a spherical orbifold and describe its singular locus.

\section{Topology of $\M(n)$ and $\L(n)$}
\label{sec-2}

Since every $n$-segment is determined by a point $X\in\R^n$, then $\L(n)$ is exactly the set $\eta^{-1}\big(\eta\big(\R^n\smallsetminus\{\vec{0}\}\big)\big)\subset\C^n$. On the other hand, the elements in $\M(n)=\L(n)\cap V_n$ are $n$-segments contained in a straight line through the origin in $\C$, and they are of the form $a X$, with $a\in\C^*$ and $X\in V_{n}\cap\mathbb{R}^{n}\smallsetminus\{\vec{0}$\}. We conclude that $\M(n)$ is diffeomorphic to $\rho^{-1}\big(\rho\big(\R^{n-1}\smallsetminus\{\vec{0}\}\big)\big)$ with $\rho\colon\C^{n-1}\smallsetminus\{\vec{0}\}\to\C\P^{n-2}$ the natural projection.

\begin{thm}
\label{variedad}
$\M(n)\subset\C^n$ is an $n$-manifold diffeomorphic to the mapping torus,$$\left(\big (\R^{n-1}\smallsetminus\{\vec{0}\}\big)\times[0,\pi]\right) \Big/\Big\{(X,0)\sim(-X,\pi)\Big\}.$$
\end{thm}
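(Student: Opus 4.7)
The plan is to parametrize $\M(n)$ by a free, properly discontinuous $\Z$--action on the simpler open set $(\R^{n-1}\smallsetminus\{\vec 0\})\times\R$, and then recognize the resulting quotient as the stated mapping torus. Identifying $V_n$ with $\C^{n-1}$ and $V_n\cap\R^n$ with $\R^{n-1}$ by suppressing the always--zero first coordinate, the description recalled just before the theorem shows that every (nonzero) point of $\M(n)$ has the form $aX$ with $a\in\C^*$ and $X\in\R^{n-1}\smallsetminus\{\vec 0\}$. Writing $a=re^{i\theta}$ and absorbing $r>0$ into $X$ yields a natural smooth surjection
\[
\Phi\colon\bigl(\R^{n-1}\smallsetminus\{\vec 0\}\bigr)\times\R\longrightarrow\M(n),\qquad \Phi(Y,\theta)=e^{i\theta}Y,
\]
and the entire argument reduces to analyzing $\Phi$.

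First I would determine the fibers. If $e^{i\theta_1}Y_1=e^{i\theta_2}Y_2$ with $Y_j\in\R^{n-1}\smallsetminus\{\vec 0\}$, then $Y_2=e^{i(\theta_1-\theta_2)}Y_1$; testing any nonzero real coordinate of $Y_1$ forces $e^{i(\theta_1-\theta_2)}\in\R$, whence $\theta_1-\theta_2=k\pi$ and $Y_2=(-1)^k Y_1$ for a unique $k\in\Z$. The fibers of $\Phi$ are therefore the orbits of the free, properly discontinuous $\Z$--action $k\cdot(Y,\theta)=\bigl((-1)^kY,\,\theta+k\pi\bigr)$. Second, the differential $d\Phi_{(Y,\theta)}(V,s)=e^{i\theta}(V+isY)$ splits into a real part $V\in\R^{n-1}$ and a purely imaginary part $isY$ which is nonzero for $s\neq 0$, hence is injective, so $\Phi$ is an immersion. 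The quotient of the $\Z$--action is a smooth $n$--manifold, and a fundamental domain for the generator $k=1$ is $(\R^{n-1}\smallsetminus\{\vec 0\})\times[0,\pi]$ with the single surviving identification $(Y,0)\sim(-Y,\pi)$; this is exactly the mapping torus of $Y\mapsto -Y$ stated in the theorem, and $\Phi$ descends to a smooth bijection
\[
\tilde\Phi\colon\bigl((\R^{n-1}\smallsetminus\{\vec 0\})\times\R\bigr)\big/\Z\longrightarrow\M(n).
\]

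The most delicate point, in my view, is not the algebraic fiber analysis but verifying that the $n$--manifold structure coming from the $\Z$--quotient coincides with the subspace topology inherited from $\C^n$, so that $\M(n)$ is a genuine \emph{embedded} submanifold rather than only an immersed one (note that $\Phi$ lands in a space of higher real dimension $2(n-1)>n$ for $n\geq 3$, so the immersion property alone does not yield an embedding). To handle this I would construct explicit local inverses: on the open set where a chosen coordinate $Y_j$ is nonzero, one can recover $e^{i\theta}$ (up to sign) from the argument of the $j$--th complex coordinate of $\Phi(Y,\theta)=e^{i\theta}Y$, and then set $Y=e^{-i\theta}\Phi(Y,\theta)$. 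These local inverses exhibit $\Phi$ as a local embedding, hence a covering map onto $\M(n)$ with its subspace topology, upgrading $\tilde\Phi$ to a diffeomorphism between the mapping torus and $\M(n)\subset\C^n$.
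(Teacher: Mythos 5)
Your proposal is correct and follows essentially the same route as the paper: both parametrize $\M(n)$ by $e^{i\theta}Y$ with $Y\in\R^{n-1}\smallsetminus\{\vec{0}\}$, identify the redundancy $(Y,\theta)\sim(-Y,\theta+\pi)$, and build the inverse locally from a branch of the argument of a nonzero coordinate. The only difference is packaging --- you phrase this as a free, properly discontinuous $\Z$-action and a covering map, whereas the paper writes a single explicit bijection $\psi$ together with an explicit branch $\theta(z)$ for $\psi^{-1}$ --- and your explicit attention to the subspace-versus-quotient topology makes precise a point the paper passes over quickly.
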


\begin{proof}
If $a=re^{i\theta}$ with $r\neq 0$ and $\theta\in[\pi,2\pi)$, then $a X=e^{i(\theta-\pi)}(-rX)$ for every $X\in \mathbb{R}^{n-1}\smallsetminus \{\vec{0}\}$ and therefore, the points in $\M(n)$ are uniquely determined by $e^{i\theta}X$ with $\theta\in[0,\pi)$ and $X\in V_n\cap\R^n$.

The function $\psi\colon\left(\big(\R^{n-1}\smallsetminus\{\vec{0}\}\big)\times[0,\pi]\right)\big/\{(X,0)\sim(-X,\pi)\}\to\M(n)$,
\[\psi\big((x_1,\dots,x_{n-1}),\theta\big)=\big(0,e^{i\theta}x_2,\dots,e^{i\theta}x_{n-1}\big),\]
is well defined since $\psi(X,0)=\psi(-X,\pi)$ and it is a continuous bijection. Consider the argument branch in $\C$ such that $-\frac{\pi}{2}<\arg(z)<\frac{3\pi}{2}$. Let $\theta\colon\C^*\to[0,\pi]$ be the function with $\theta(x)=0$ if $x$ is real and positive, $\theta(x)=\pi$ if $x$ is real and negative, and for all $z\in\C\smallsetminus\R$,
\[\theta(z)=\begin{cases}\arg(z) & \text{if }~ Im(z)>0. \\ \arg(-z) & \text{if }~ Im(z)<0 ~.\end{cases}\]
If $Z=(0,z_2,\dots,z_n)\in\M(n)$ and $z_k\neq 0$, then $\psi^{-1}(Z)=(e^{-i\theta(z_k)}(z_2,z_3,\dots,z_n),\theta(z_k))$ is the inverse function of $\psi$. Both functions $\psi$ and $\psi^{-1}$ are differentiable since they are differentiable in each coordinate.
\end{proof}

\begin{cor}
\label{producto}
$\L(n)\subset\C^n$ is an $(n+2)$-manifold diffeomorphic to $\M(n)\times\C$.
\end{cor}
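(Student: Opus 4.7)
The plan is to exhibit an explicit diffeomorphism $\Phi\colon\M(n)\times\C\to\L(n)$ that implements the foliation decomposition already mentioned in the introduction, namely
\[\L(n)=\bigcup_{b\in\C}\Big\{\M(n)+(b,\dots,b)\Big\}.\]
The map I would use is
\[\Phi\big((0,w_2,\dots,w_n),b\big):=(b,w_2+b,\dots,w_n+b),\]
with candidate inverse
\[\Phi^{-1}(z_1,\dots,z_n):=\big((0,z_2-z_1,\dots,z_n-z_1),\,z_1\big).\]

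First I would check that $\Phi$ lands in $\L(n)$: if $(0,w_2,\dots,w_n)\in\M(n)$, its vertices lie on a line $L$ through the origin, and translating by $\vec b=(b,\dots,b)$ moves them to the line $L+b$, preserving collinearity. Conversely, for $(z_1,\dots,z_n)\in\L(n)$, all vertices lie on some line $L'\subset\C$; then $z_j-z_1$ lie on $L'-z_1$, a line through $0$, so $(0,z_2-z_1,\dots,z_n-z_1)\in\M(n)$ and $\Phi^{-1}$ is well defined. A direct substitution shows $\Phi\circ\Phi^{-1}=\mathrm{id}$ and $\Phi^{-1}\circ\Phi=\mathrm{id}$.

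Next, both $\Phi$ and $\Phi^{-1}$ are restrictions of $\C$-linear (in particular smooth) maps between the ambient spaces $\C^n\times\C$ and $\C^n$, so they are smooth in the sense of the induced differentiable structures on $\M(n)$ and $\L(n)$ as subsets of $\C^n$ (one checks this either by composing with the smooth chart $\psi^{-1}$ of Theorem \ref{variedad} or by noting that a bijection between subsets of Euclidean space that is the restriction of a global linear isomorphism is automatically a diffeomorphism of those subsets whenever the image is declared the correct submanifold). Since $\M(n)$ is a smooth $n$-manifold by Theorem \ref{variedad} and $\C\cong\R^2$ is a $2$-manifold, the product $\M(n)\times\C$ is an $(n+2)$-manifold, and the diffeomorphism transports this structure to $\L(n)$, giving the required dimension.

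There is no real obstacle here; the only subtlety is making sure that the smooth structure on $\L(n)$ one claims is the one coming from the ambient $\C^n$. I would address this by noting that $\Phi$ is the restriction of the global affine isomorphism $\C^{n}\times\C\to\C^{n}\times\C$, $((w_1,\dots,w_n),b)\mapsto(w_1+b,\dots,w_n+b,b)$, whose first projection lands exactly on $\L(n)$ when the first factor is $\M(n)$; hence $\L(n)$ inherits a well-defined submanifold structure from $\M(n)\times\C$ compatible with the inclusion into $\C^n$.
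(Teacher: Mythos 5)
Your proposal is correct and follows essentially the same route as the paper: the paper's proof uses exactly the map $((0,z_2,\dots,z_n),b)\mapsto(b,z_2+b,\dots,z_n+b)$ with the same inverse, and invokes Theorem \ref{variedad} to conclude it is a diffeomorphism. Your version merely spells out the collinearity check and the smoothness of the restricted affine maps in more detail.
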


\begin{proof}
The function $((0,z_2,\dots,z_n),b)\mapsto(b,z_2+b,\dots,z_n+b)$ is a continuous bijection from $\M(n)\times\C$ to $\L(n)$. The inverse, given by $(z_1,z_2,\dots,z_n)\mapsto((0,z_2-z_1,\dots,z_n-z_1),z_1)$, is also continuous. From Theorem \ref{variedad}, it follows that this function is a diffeomorphism and $\L(n)$ is a manifold of real dimension $n+2$.
\end{proof}

\begin{exa}
From Theorem \ref{variedad}, it follows that $\mathbb{M}(3)$ is diffeomorphic to the open solid 3-torus without the center $\big(\B^2\times\S^1\big)\smallsetminus\big(\{0\}\times\S^1\big)$, where $\B^2\subset\R^2$ denotes the unitary open ball. Both, $\M(3)\subset V_3$ and $\L(3)\subset\C^3$ have real co-dimension one.
\end{exa}

\section{Straight lines of $\M(n)$ and $\L(n)$}
\label{sec-3}

A classical definition is that a smooth submanifold of $\R^m$ is $l$-ruled if through each point of it, there are $l$ distinct straight lines contained in it. This kind of submanifolds is uncommon; for example, the only $3$-ruled surface in $\R^3$ is the plane. In this section we describe the straight lines contained in $\M(n)$ and $\L(n)$, and we prove that both manifolds contain a big part of their tangent spaces at each point.

\begin{defi}
\label{copies}
Let $Z=z(0,c_2,\dots,c_n)\in\M(n)$  with $z\in\C^*$ and $c_j\in\R$ and $\vec{b}\in\cD_n$.
\begin{enumerate}[a.]
    \item $\C_{Z+\vec{b}}^{\ast}:=\big\{\lambda Z+\vec{b}\in\L(n)\colon \lambda\in\C^*\big\}.$
    \item $\R^{n-1}_{Z+\vec{b}}:=\big\{z(0,c_2+x_2,\dots,c_n+x_n)+\vec{b}\in\L(n)\colon (x_{2},\dots,x_n)\in\R^{n-1}\smallsetminus\{-(c_{2},\dots,c_{n})\}\big\}$.
    \item $\cD_{Z+\vec{b}}:=\{Z+\vec{b}+\vec{\omega}\in \mathbb{L}(n):\vec{\omega}\in \mathcal{D}_{n}\}$.
\end{enumerate} 
\end{defi}

It is clear that $\C_{Z+\vec{b}}^{\ast}$ is a copy of $\C^*$, $\R^{n-1}_{Z+\vec{b}}$ is a copy of $\R^{n-1}\smallsetminus\{\vec{0}\}$, $\cD_{Z+\vec{b}}$ is a translation of the diagonal $\cD_n\cong\C$ to the point $Z+\vec{b}$, and the spaces $\C_Z^{\ast}:=\C^{\ast}_{Z+\vec{0}}$ and $\R^{n-1}_Z:=\R^{n-1}_{Z+\vec{0}}$ are contained in $\M(n)$.

\begin{rem}
\label{spaces}
For all $Z\in\M(n)$ we have that:

\noindent
$\bullet~$ For $b\in\C^*$, $\C_{Z+\vec{b}}^{\ast}\cap \mathbb{M}(n)=\R^{n-1}_{Z+\vec{b}}\cap \mathbb{M}(n)=\emptyset$.

\noindent
$\bullet~$ For $b\in\C$, $\cD_{Z+\vec{b}}\cap \mathbb{M}(n)=\{Z\}$, $ \cD_{Z+\vec{b}}\cap\mathbb{C}_{Z+\vec{b}}^{\ast}=\{Z+\vec{b}\}$, $\cD_{Z+\vec{b}}\cap\mathbb{R}_{Z+\vec{b}}^{n-1}=\{Z+\vec{b}\}$, and $\mathbb{C}_{Z+\vec{b}}^{\ast}\cap \mathbb{R}_{Z+\vec{b}}^{n-1}=\{rZ+\vec{b}:r\in\R^*:=\mathbb{R}\smallsetminus\{0\}\}\cong\mathbb{R}^*$.
\end{rem}

\begin{defi}
For $Z=(z_1,\dots,z_n),W=(w_1,\dots,w_n)\in\C^n$, we denote $\langle\!\langle Z,W\rangle\!\rangle=\sum_{j=1}^nz_j\pmb{\cdot}w_j$, with $z_j\pmb{\cdot}w_j= (x_j+i y_j)\pmb{\cdot}(\hat{x}_j+i\hat{y}_j)=x_j\hat{x}_j+y_j\hat{y}_j$ the usual inner product in the plane.
\end{defi}

\subsection{Tangent spaces of $\M(n)$ and $\L(n)$}

Remember that the tangent space of the submanifold $M\subset\R^n$ at $x\in M$, denoted by $T_xM$, is defined as the set of all vectors $\alpha'(0)$ such that $\alpha: (-\varepsilon,\varepsilon) \to M$, is a differentiable curve that satisfies that $\alpha(0)=x$. In Subsection \ref{Subsec_Straightlines} we make an abuse of notation using $T_Z\L(n)$ to refer to the affine space $Z+T_Z\L(n)$.

\subsubsection*{Tangent space of $\M(n)$}

From now on we consider the charts of $\M(n)$ given by $\varphi_k\colon\cU_k\to\R^n$, where$$\cU_k:=\{z(0,r_2,\dots,r_{k-1},1,r_{k+1},\dots,r_n)\in\M(n)\colon z=x+iy\in\C^*,r_m\in\R\}\quad\text{and}$$ $$\varphi_k(z(0,r_2,\dots,r_{k-1},1,r_{k+1},\dots,r_n))=(r_2,\dots,r_{k-1},x,y,r_{k+1},\dots,r_n).$$
We will do the calculations for $k=2$, other cases are analogous. Let fix $Z=\zeta(0,1,c_3,\dots,c_n)\in\cU_2$. Consider the following $n$ curves in $\M(n)$:
\begin{equation}
\label{curve}
\alpha_j(s):=\begin{cases}
(1+is)Z & \text{if}\quad j=1,\\
\zeta\big(0,1,c_3,\dots,c_{j-1},c_j+s,c_{j+1},\dots,c_n\big) & \text{if}\quad j\in \{2,3,\dots,n\}.\end{cases}
\end{equation}
For all $j\in \{ 1,\dots, n \}$, $\alpha_j$ is a differentiable curve such that $\alpha_j(0)=Z$ and therefore, an orthogonal real basis of the tangent space $T_Z\M(n)$ is given by
\begin{equation}
\label{alphas_primas}
\{\alpha'_1(0),\alpha'_2(0),\dots,\alpha'_{n}(0)\} = \{ iZ, \zeta e_2, \dots , \zeta e_n  \},
\end{equation}
where $e_j\in\C^n$ denotes the $j$-th vector in the complex canonical basis. Furthermore, $\C_Z^{\ast}\cup\{0\}$ and $\R_Z^{n-1}\cup\{\vec{0}\}$ are subspaces of $T_Z\M(n)$ and the sets$$\{ \alpha_1'(0),Z=\alpha_2'(0)+c_3\alpha_3'(0)+\cdots+c_n\alpha_n'(0)\}\quad\text{and}\quad\{\alpha_2'(0),\alpha_3'(0),\dots,\alpha_n'(0)\}$$are orthogonal basis of them, respectively.

\subsubsection*{Tangent space of $\L(n)$}
\label{susection_tangent}

In this case we will use the charts of $\L(n)$ given by $\hat{\varphi}_k\colon \hat{\cU}_l\to \R^{n+2}$, where$$\hat{\cU}_l=\{z(0,r_2,\dots,r_{k-1},1,r_{k+1},\dots,r_n)+\vec{b}\colon z=x+iy\in\C^*,r_m\in\R,b=u+iv\in\C\},$$and$$\hat{\varphi}_k\big(z(0,r_2,\dots,r_{k-1},1,r_{k+1},\dots,r_n)+\vec{b}\,\big)=(r_2,\dots,r_{k-1},x,y,r_{k+1},\dots,r_n,u,v).$$Again, we only work in the case $k=2$. Let fix $\hat{Z}=Z+\vec{b}\in\hat{\cU}_2$ with $Z=\zeta(0,1,c_3,\dots,c_n)\in\cU_2$ and $\vec{b}\in\cD_n$. For $j\in\{1,2,\dots,n\}$, the curve $\hat\alpha_j(t)=\alpha_j(t)+\vec{b}$ belongs to $\L(n)$, where $\alpha_j(t)$ are the functions in equation (\ref{curve}), $\hat{\alpha}_j(0)=\hat{Z}$, and $\hat{\alpha}'_j(0)=\alpha'_j(0)$. These $n$ vectors determine orthogonal directions in the tangent space $T_{\hat{Z}} \L(n)$. Consider $\hat{\alpha}_{n+1},\hat{\alpha}_{n+2}\colon(-\varepsilon,\varepsilon)\to\L(n)$ given by $\hat{\alpha}_{n+1}(t)=\hat{Z}+s\vec{1}$ and $\hat{\alpha}_{n+2}(t)=\hat{Z}+s\vec{i}$. These two curves satisfy that $\hat{\alpha}_{n+1}(0)=\hat{\alpha}_{n+2}(0)=\hat{Z}$, and the vectors $\hat{\alpha}'_{n+1}(0)=\vec{1},\hat{\alpha}'_{n+2}(0)=\vec{i}\in T_{\hat{Z}} \L(n)$ are orthogonal. We conclude that
\begin{equation}\label{alphaprimaL}
\{\hat{\alpha}'_1(0),\dots,\hat{\alpha}'_n(0),\hat{\alpha}'_{n+1}(0),\hat{\alpha}'_{n+2}(0)\}=\{ iZ, \zeta e_2, \dots , \zeta e_n,\vec{1},\vec{i}\}\end{equation}
is a non-orthogonal real basis of $T_{\hat{Z}} \L(n)$. Note that the sets $\{ \hat{\alpha}_1'(0),\hat{Z}=\hat{\alpha}_2'(0)+c_3\hat{\alpha}_3'(0)+\cdots+c_n\hat{\alpha}_n'(0)\}$, $\{\hat{\alpha}_2'(0),\hat{\alpha}_3'(0),\dots,\hat{\alpha}_n'(0)\}$ and $\{\hat\alpha'_{n+1}(0),\hat\alpha'_{n+2}(0)\}$ are orthogonal basis of $\C_{Z+\vec{b}}^{\ast}\cup\{0\}$, $\R_{Z+\vec{b}}^{n-1}\cup\{\vec{0}\}$ and $\mathcal{D}_{Z+\vec{b}}$, respectively.

\subsection{Straight lines of $\mathbb{M}(n)$ and $\mathbb{L}(n)$} \label{Subsec_Straightlines}

\subsubsection*{Straight lines of $\mathbb{M}(n)$}

\begin{prop}
\label{lines}
Let $Z,W\in\M(n)$. The segment $\sigma(t)=(1-t)Z+tW$, with $t\in[0,1]$, is contained in $\M(n)$ if and only if $W\in\big(\R^{n-1}_Z\cup\C_Z^{\ast}\big)\smallsetminus\{rZ\colon r\in(-\infty, 0]\}$.
\end{prop}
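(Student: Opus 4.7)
The plan is to recast membership in $\M(n)$ as a concrete identity and then analyze the two complex scalars appearing in $Z$ and $W$. Writing $Z=zC$ and $W=wD$ with $z,w\in\C^*$ and $C=(0,c_2,\dots,c_n),\,D=(0,d_2,\dots,d_n)\in V_n\cap\R^n$ both nonzero, a point $(0,\zeta_2,\dots,\zeta_n)$ lies in $\M(n)$ exactly when its nonvanishing entries are collinear with the origin in $\C$, i.e.\ when $\mathrm{Im}(\zeta_j\overline{\zeta_k})=0$ for every $j,k$ and some $\zeta_j\neq 0$. Expanding $\sigma(t)_j=(1-t)zc_j+twd_j$, a direct computation gives the key identity
\[
\mathrm{Im}\bigl(\sigma(t)_j\,\overline{\sigma(t)_k}\bigr)=t(1-t)\,\mathrm{Im}(z\overline{w})\,(c_jd_k-d_jc_k),
\]
which will drive both implications.

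For the $(\Leftarrow)$ direction I would split into two sub-cases. If $W\in\C_Z^{\ast}$, write $W=\lambda Z$ with $\lambda\in\C^*\setminus\R_{\leq 0}$; then $\sigma(t)=[(1-t)+t\lambda]Z$, and the scalar vanishes only when $\lambda=(t-1)/t\in\R_{\leq 0}$, which the hypothesis forbids, so $\sigma(t)$ is a nonzero complex multiple of $Z$ and hence in $\M(n)$. If instead $W\in\R_Z^{n-1}\setminus\C_Z^{\ast}$, write $W=zY$ with $Y\in V_n\cap\R^n\setminus\{\vec 0\}$ not a real multiple of $C$; then $\sigma(t)=z\cdot[(1-t)C+tY]$, and a zero of the real factor for some $t\in(0,1)$ would force $Y$ proportional to $C$, contradicting the sub-case.

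For the $(\Rightarrow)$ direction I would apply the identity on the open interval $(0,1)$ to conclude that $\mathrm{Im}(z\overline{w})(c_jd_k-d_jc_k)=0$ for every $j,k$. If $\mathrm{Im}(z\overline{w})=0$ then $w=rz$ for some $r\in\R^*$ and $W=z(rD)\in\R_Z^{n-1}$; otherwise the vanishing of every $2\times 2$ minor forces $D=\alpha C$ for some $\alpha\in\R^*$, so $W=(w\alpha/z)Z\in\C_Z^{\ast}$ with non-real scalar, which in particular is not in $\R_{\leq 0}$. To exclude the forbidden ray in the first sub-case I would observe that if $W=sZ$ with $s\leq 0$ (necessarily $s<0$ since $W\neq\vec 0$), then $\sigma(t)=[(1-t)+ts]Z$ hits $\vec 0$ at $t=1/(1-s)\in(0,1)$, contradicting $\sigma(t)\in\M(n)$. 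The main obstacle I anticipate is organizational: the same $W$ admits several useful parametrizations ($\lambda Z$, $zY$, or $wD$), and the loci $\C_Z^{\ast}$ and $\R_Z^{n-1}$ overlap along $\{rZ:r\in\R^*\}$, so one must keep track of which representation is active in each sub-case. Once the imaginary-part identity is in hand, everything else reduces to elementary real linear algebra.
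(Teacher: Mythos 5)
Your proof is correct, but it takes a genuinely different route from the paper's. The paper proves the backward implication by simply invoking Definition \ref{copies}, and proves the forward implication by contraposition: assuming $W\notin\R^{n-1}_Z\cup\C_Z^{\ast}$, it splits into cases according to whether $Z$ and $W$ share an index at which both coordinates are nonzero, and argues geometrically that some $\sigma(t)$ then has nonzero vertices on two distinct lines through the origin. Your argument instead rests on the single identity $\mathrm{Im}\bigl(\sigma(t)_j\,\overline{\sigma(t)_k}\bigr)=t(1-t)\,\mathrm{Im}(z\overline{w})\,(c_jd_k-d_jc_k)$, from which both implications fall out of a dichotomy on whether $\mathrm{Im}(z\overline{w})$ vanishes (giving $W\in\R^{n-1}_Z$) or all the $2\times2$ minors $c_jd_k-d_jc_k$ vanish (giving $W\in\C_Z^{\ast}$ with a non-real scalar). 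What your approach buys: it is uniform (no case analysis on coordinate supports, which in the paper's Case II is stated somewhat loosely), it proves the forward direction directly rather than by contrapositive, and it makes explicit why the ray $\{rZ\colon r\in(-\infty,0]\}$ must be excised --- for $W=sZ$ with $s<0$ the segment passes through $\vec{0}\notin\M(n)$ at $t=1/(1-s)$ --- a point the paper's proof leaves implicit in its appeal to the definition. What the paper's approach buys is brevity and a more visual explanation of why a convex combination of two non-proportional $n$-segments fails to be an $n$-segment. Your one organizational worry (the overlap of $\C_Z^{\ast}$ and $\R^{n-1}_Z$ along $\{rZ\colon r\in\R^*\}$) is handled correctly, since that overlap is absorbed into your first sub-case.
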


\begin{proof}
From Definition \ref{copies}, it follows that $\sigma\subset\M(n)$ for $W\in\big(\R^{n-1}_Z\cup\C_Z^{\ast}\big)\smallsetminus\{rZ\colon r\in(-\infty, 0]\}$. Suppose that $Z=(0,z_2,\dots,z_n)$ and $W=(0,w_2,\dots,w_n)\notin \R^{n-1}_Z\cup\C_Z^{\ast}$. There are two cases:

$I-$\textit{There is no $2\leq j\leq n$ with $z_j\neq 0$ and $w_j\neq 0$ simultaneously.} Since $W\notin\R^{n-1}_Z$, then for all $0<t<1$, the $n$-gon $\sigma(t)=(1-t)Z+tW$ has non-zero vertices at two different lines through $0\in\C$, then it is not an $n$-segment.

$II-$\textit{There is $2\leq j\leq n$ with $z_j\neq 0$ and $w_j\neq 0$ simultaneously.} Since $W\notin\C_Z^{\ast}$, then there is $m\neq j$, such that $z_m=rz_j$ and $w_m=sw_j$ with $r,s\in\R^*$. For $0<t<1$, the complex numbers $(1-t)z_j+tw_j$ and $(1-t)z_m+tw_m=(1-t)rz_j+tsw_j$ are not collinear with 0 ($z_j$ and $w_j$ are linearly independent over $\R$). We conclude that $\sigma(t)=(1-t)Z+tW$ is not an $n$-segment.
\end{proof}

\begin{thm}\label{Teorema_reglado}
For all $Z \in \M(n)$, there are $n=\dim\big(\M(n)\big)$ orthogonal lines contained in the intersection $\M(n)\cap T_Z\M(n)$. Furthermore, there are spaces isometric to $\R^{n-1}\smallsetminus \{ \vec{0}\}$ and $\R^{2}\smallsetminus \{\vec{0}\}$ that are contained in $\M(n)\cap T_Z\M(n)$. 
\end{thm}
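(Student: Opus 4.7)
My plan is, by symmetry among the charts, to work in $\cU_2$ and write $Z=\zeta(0,1,c_3,\dots,c_n)$ with $\zeta\in\C^*$ and $c_j\in\R$. I will invoke the orthogonal basis $\{iZ,\,\zeta e_2,\,\zeta e_3,\,\dots,\,\zeta e_n\}$ of $T_Z\M(n)$ produced in~(\ref{alphas_primas}), take the straight line through $Z$ in each of its $n$ directions, use Definition~\ref{copies} to show that each such line is entirely contained in $\M(n)$, and finally identify $\R_Z^{n-1}$ and $\C_Z^\ast$ themselves as the claimed isometric copies of $\R^{n-1}\smallsetminus\{\vec{0}\}$ and $\R^2\smallsetminus\{\vec{0}\}$.

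For the direction $iZ$, set $L_1:=\{Z+t(iZ):t\in\R\}=\{(1+it)Z:t\in\R\}$; since $1+it\neq0$ for every $t\in\R$, $L_1\subset\C_Z^\ast\subset\M(n)$. For each $j\in\{2,\dots,n\}$, set $L_j:=\{Z+t\zeta e_j:t\in\R\}$; taking $x_j=t$ and $x_i=0$ ($i\neq j$) in Definition~\ref{copies}(b) shows that $L_j\subset\R_Z^{n-1}\subset\M(n)$. Each $L_k$ is contained in the affine tangent space $Z+T_Z\M(n)$ by construction, and pairwise orthogonality of the $n$ direction vectors is precisely the content of~(\ref{alphas_primas}); this gives the first assertion. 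For the second, the map $(x_2,\dots,x_n)\mapsto Z+\zeta(0,x_2,\dots,x_n)$ is a similarity of ratio $|\zeta|$ from $\R^{n-1}\smallsetminus\{-(1,c_3,\dots,c_n)\}$ onto $\R_Z^{n-1}$, so rescaling the domain by $|\zeta|^{-1}$ turns it into an isometry; hence $\R_Z^{n-1}\subset\M(n)\cap T_Z\M(n)$ is an isometric copy of $\R^{n-1}\smallsetminus\{\vec{0}\}$. Similarly, since $Z=\zeta e_2+c_3\zeta e_3+\cdots+c_n\zeta e_n\in T_Z\M(n)$, the real $2$-plane $\mathrm{span}_\R\{Z,iZ\}$ lies in $T_Z\M(n)$, whence $\C_Z^\ast=\{\lambda Z:\lambda\in\C^\ast\}\subset Z+T_Z\M(n)$; the map $\lambda\mapsto\lambda Z$ is a similarity of ratio $|Z|$ that exhibits $\C_Z^\ast$ as an isometric copy of $\R^2\smallsetminus\{\vec{0}\}$.

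The main subtlety I anticipate is the line $L_2$ in the exceptional case $c_3=\cdots=c_n=0$, i.e.\ $Z=\zeta e_2$: there $L_2$ passes through $\vec{0}$ at $t=-1$, and $\vec{0}$ is excluded from $\M(n)$ under the convention adopted in the proof of Theorem~\ref{variedad}. The orthogonal complement in $T_Z\M(n)$ of $\{iZ,\zeta e_3,\dots,\zeta e_n\}$ is the one-dimensional $\mathrm{span}_\R\{\zeta e_2\}$, so one cannot substitute another direction while keeping all $n$ directions mutually orthogonal; the cleanest resolution is to read ``straight line contained in $\M(n)$'' as permitting the omission of a single point, consistent with the abuse of notation $T_Z\M(n)\equiv Z+T_Z\M(n)$ already in use in this subsection.
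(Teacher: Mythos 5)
Your construction follows the paper's up to the final step, but the last paragraph contains a genuine error that leaves the exceptional case unproved. You correctly observe that when $c_3=\cdots=c_n=0$ the coordinate line $L_2=\{Z+t\zeta e_2\}$ passes through $\vec{0}\notin\M(n)$, but your claim that ``one cannot substitute another direction while keeping all $n$ directions mutually orthogonal'' is false: it only holds if you insist on keeping the other $n-1$ coordinate directions fixed, and nothing in the statement forces that. You are free to replace the whole frame $\{\zeta e_2,\dots,\zeta e_n\}$ by any other orthogonal frame of the real $(n-1)$-plane $\zeta\R^{n-1}$; since $i\zeta\pmb{\cdot}\zeta=0$, the vector $iZ$ is $\langle\!\langle\cdot,\cdot\rangle\!\rangle$-orthogonal to \emph{all} of $\zeta\R^{n-1}$, so any such frame together with $iZ$ still yields $n$ mutually orthogonal directions. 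Now, a line $\{Z+t\zeta v\}$ with $v\in\R^{n-1}$ hits $\vec{0}$ only if $v$ is parallel to $(c_2,\dots,c_n)$, i.e.\ only if its direction is parallel to $Z$; hence at most one line of any orthogonal frame through $Z$ can meet $\vec{0}$, and since $n-1\geq 2$ a rotation of the frame in a $2$-plane containing that offending direction removes the problem. This is precisely what the paper does (``taking $n-1$ orthogonal lines in $\R^{n-1}_Z$ through $Z$ that do not cross $\vec{0}$''), and it proves the theorem as stated, with honest, unpunctured lines. Concretely, for $Z=\zeta e_2$ take the directions
\[
\tfrac{1}{\sqrt{2}}\zeta(e_2+e_3),\quad \tfrac{1}{\sqrt{2}}\zeta(e_2-e_3),\quad \zeta e_4,\ \dots,\ \zeta e_n,\quad iZ;
\]
each of the first two lines has $(1+t/\sqrt2)$ and $t/\sqrt2$ as second and third real parameters, which never vanish simultaneously, so none of these $n$ lines meets $\vec{0}$. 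Your proposed fix --- reinterpreting ``line contained in $\M(n)$'' to allow a missing point --- is therefore both unnecessary and a weakening of the statement, and should be discarded.

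The remainder of your argument (the containments $\R^{n-1}_Z,\ \C_Z^{\ast}\subset\M(n)\cap T_Z\M(n)$ via Definition~\ref{copies} and the basis \eqref{alphas_primas}, and the identification of these sets, after rescaling, as isometric copies of $\R^{n-1}\smallsetminus\{\vec{0}\}$ and $\R^{2}\smallsetminus\{\vec{0}\}$) coincides with the paper's proof and is correct.
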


\begin{proof}
Let $Z=\zeta(0,c_2,c_3,\dots,c_n)\in\M(n)$. The spaces generated by $$\{\alpha_2'(0),\alpha_3'(0),\dots,\alpha_n'(0)\}\quad\text{and}\quad\{\alpha_1'(0),  Z=c_2 \alpha_2'(0)+c_3\alpha_3'(0)+\cdots+c_n\alpha_n'(0)\}$$ are contained in $T_Z\M(n)$, where the derivatives $\alpha'_j$ are the functions in equation \eqref{alphas_primas}. Such spaces correspond to $\mathbb{R}^{n-1}_{Z}\cup\{\vec{0}\}$ and $\mathbb{C}^{\ast}_{Z}\cup\{\vec{0}\}$, respectively. It follows that $\mathbb{R}^{n-1}_{Z}$ $\cong \mathbb{R}^{n-1}\smallsetminus \{\vec{0}\} $ and $\mathbb{C}^{\ast}_{Z}\cong \mathbb{R}^2 \smallsetminus \{\vec{0}\}$ are contained in $\mathbb{M}(n)\cap T_{Z}\mathbb{M}(n)$. Taking $n-1$ orthogonal lines in $\mathbb{R}^{n-1}_{Z}$ through $Z$ that do not cross $\vec{0}$, and  the line $(1+it)Z$ contained in $\mathbb{C}^{\ast}_{Z}$, we get $n$ orthogonal lines in $\mathbb{M}(n)\cap T_{Z}\mathbb{M}(n)$.
\end{proof}

\subsubsection*{Straight lines of $\mathbb{L}(n)$}

\begin{prop}
Let $Z=(z_1,z_2,\dots,z_n), W=(w_1,w_2,\dots,w_n)\in \mathbb{L}(n)$. The segment $\sigma(t)=(1-t)Z+tW$, with $t\in [0,1]$, is contained in $\mathbb{L}(n)$ if and only if $W-\vec{w}_1\in\big(\R^{n-1}_{Z-\vec{z}_1}\cup\C_{Z-\vec{z}_1}^{\ast}\big)\smallsetminus\{r(Z-\vec{z}_1)\colon r\in\R,~r\leq 0\}$.
\end{prop}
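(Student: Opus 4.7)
The plan is to reduce this statement to Proposition \ref{lines} by exploiting the product structure $\L(n)\cong\M(n)\times\C$ from Corollary \ref{producto}. The key observation is that for any $P=(p_1,\dots,p_n)\in\L(n)$, the translate $P-\vec{p}_1=(0,p_2-p_1,\dots,p_n-p_1)$ lies in $\M(n)$, since collinearity of vertices is preserved by global translations and the first vertex has been moved to the origin.

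First I would apply this observation to the endpoints: set $Z':=Z-\vec{z}_1\in\M(n)$ and $W':=W-\vec{w}_1\in\M(n)$. Then I would compute the first coordinate of $\sigma(t)$, which is $\sigma_1(t)=(1-t)z_1+tw_1$, and use linearity to rewrite
$$\sigma(t)-\overrightarrow{\sigma_1(t)}=(1-t)(Z-\vec{z}_1)+t(W-\vec{w}_1)=(1-t)Z'+tW'.$$
Since $\L(n)$ is invariant under diagonal translations in $\cD_n$ (equivalently, $\L(n)=\M(n)+\cD_n$), we have $\sigma(t)\in\L(n)$ for all $t\in[0,1]$ if and only if $\sigma(t)-\overrightarrow{\sigma_1(t)}\in\M(n)$ for all $t\in[0,1]$, that is, if and only if the segment from $Z'$ to $W'$ is contained in $\M(n)$.

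Applying Proposition \ref{lines} to $Z'$ and $W'$, this happens precisely when $W'\in\big(\R^{n-1}_{Z'}\cup\C^{\ast}_{Z'}\big)\smallsetminus\{rZ'\colon r\leq 0\}$. Substituting back $Z'=Z-\vec{z}_1$ and $W'=W-\vec{w}_1$ gives exactly the stated condition. I do not anticipate any real obstacle here: the argument is essentially a translation by $\vec{z}_1$ and $\vec{w}_1$ at the two endpoints, compatible with the linear interpolation, which is why Definition \ref{copies} was framed to admit an arbitrary base point $\vec{b}\in\cD_n$. The only thing worth checking carefully is that the excluded set $\{r(Z-\vec{z}_1)\colon r\leq 0\}$ transforms correctly, which it does because the translation $P\mapsto P-\vec{p}_1$ is a linear operation on each fixed-first-vertex slice and the parameter $r$ passes through unchanged.
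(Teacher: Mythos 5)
Your proposal is correct and follows essentially the same route as the paper: translate by the diagonal vector $(1-t)\vec{z}_1+t\vec{w}_1$ to reduce the segment in $\L(n)$ to the segment $(1-t)(Z-\vec{z}_1)+t(W-\vec{w}_1)$ in $\M(n)$, using invariance of $\L(n)$ under $\cD_n$-translations and the fact that an $n$-segment with first vertex $0$ lies in $\M(n)$, then invoke Proposition \ref{lines}. No gaps.
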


\begin{proof}
If $\sigma(t)=(1-t)Z+tW$ and $\hat{\sigma}(t)=(1-t)(Z-\vec{z}_1)+t(W-\vec{w}_1)$, then the segment $\sigma(t)-\hat{\sigma}(t)=(1-t)\vec{z}_1+t\vec{w}_1$ with $t\in[0,1]$, is contained in the diagonal $\cD_n\subset\mathbb{C}^{n}$. So, $\sigma(t)\in\cD_n$ if and only if $\hat{\sigma}(t)\in\cD_n$, and therefore, $\sigma(t)\in\mathbb{L}(n)$ if and only if $\hat{\sigma}(t)\in\mathbb{M}(n)$. We conclude the proof using Proposition \ref{lines}.
\end{proof}

\noindent


\begin{thm}\label{Teorema_reglado2}
For all $\hat{Z}\in \mathbb{L}(n)$, there are $n+2=\dim(\mathbb{L}(n))$ linearly independent straight lines contained in the intersection $\mathbb{L}(n)\cap T_{\hat{Z}}\mathbb{L}(n)$. Furtheremore, there are spaces isometric to $\mathbb{R}^{n+1}\smallsetminus \mathbb{R}^{2}$ and $\mathbb{R}^{2}\smallsetminus\{\vec{0}\}$ contained in $\L(n)\cap T_{\hat{Z}}\L(n)$. 
\end{thm}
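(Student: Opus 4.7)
The plan is to mimic the structure of the proof of Theorem \ref{Teorema_reglado}, adding the two new directions coming from the diagonal translations $\vec{1},\vec{i}$ that appear in the tangent basis \eqref{alphaprimaL} of $\L(n)$. Fix $\hat Z=Z+\vec{b}$ with $Z=\zeta(0,1,c_3,\dots,c_n)\in\cU_2$, so that the affine tangent space $\hat Z+T_{\hat Z}\L(n)$ is spanned (with the convention of Subsection \ref{Subsec_Straightlines}) by $\{iZ,\zeta e_2,\dots,\zeta e_n,\vec 1,\vec i\}$, exactly the tangent vectors $\hat\alpha'_j(0)$ of equation \eqref{alphaprimaL}.

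First I would identify the two distinguished pieces of $\L(n)$ passing through $\hat Z$ that realise the claimed isometric copies. The copy of $\R^2\smallsetminus\{\vec 0\}$ is $\C^{\ast}_{Z+\vec b}$: by Definition \ref{copies}(a) this lies in $\L(n)$ and, parametrised by $\lambda\mapsto \lambda Z+\vec b$, it is an isometric copy of $\C^{\ast}$ whose two tangent directions at $\hat Z$ are $Z$ and $iZ$, both in $T_{\hat Z}\L(n)$; hence the whole surface $\C^{\ast}_{Z+\vec b}$ lies inside the affine tangent space. For the copy of $\R^{n+1}\smallsetminus\R^2$ I would take the union
\[
\cS \;=\;\bigcup_{\vec\omega\in\cD_n}\R^{n-1}_{Z+\vec b+\vec\omega}.
\]
Each summand is contained in $\L(n)$ by Definition \ref{copies}(b), and every point of $\cS$ is of the form $\zeta(0,c_2+x_2,\dots,c_n+x_n)+\vec b+\vec\omega$ with $(x_2,\dots,x_n)\in\R^{n-1}$ and $\vec\omega\in\cD_n$, so $\cS\subset\hat Z+T_{\hat Z}\L(n)$ because its translation directions from $\hat Z$ are precisely $\zeta e_2,\dots,\zeta e_n,\vec 1,\vec i$.

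Next I would pin down the isometry type of $\cS$. The affine map
\[
\Phi\bigl((x_2,\dots,x_n),\vec\omega\bigr)\;=\;\zeta(0,c_2+x_2,\dots,c_n+x_n)+\vec b+\vec\omega
\]
is an isometric embedding of $\R^{n-1}\times\cD_n\cong\R^{n+1}$ onto an affine $(n+1)$-plane of $\C^n$ (the directions $\zeta e_j$ and $\vec 1,\vec i$ are linearly independent over $\R$). By the definition of $\R^{n-1}_{Z+\vec b+\vec\omega}$, the points with $(c_2+x_2,\dots,c_n+x_n)=\vec 0$ are removed for every $\vec\omega$; these excluded points form exactly the affine copy $\vec b+\cD_n\cong\R^2$ inside the image. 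Consequently $\cS$ is isometric to $\R^{n+1}\smallsetminus\R^2$, as required. The isometry $\C^{\ast}_{Z+\vec b}\cong\R^2\smallsetminus\{\vec 0\}$ is immediate from the parametrisation $\lambda\mapsto\lambda Z+\vec b$ together with the fact that $\|Z\|\neq 0$ allows one to rescale.

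Finally, to exhibit $n+2$ linearly independent lines in $\L(n)\cap T_{\hat Z}\L(n)$ through $\hat Z$, I would choose: inside $\R^{n-1}_{\hat Z}\subset\cS$, the $n-1$ mutually orthogonal lines in the directions $\zeta e_2,\dots,\zeta e_n$; inside $\cD_{\hat Z}\subset\cS$, the two lines $\hat Z+s\vec 1$ and $\hat Z+s\vec i$; and inside $\C^{\ast}_{Z+\vec b}$, the line $s\mapsto(1+is)Z+\vec b$ with direction $iZ$. Their directions form precisely the basis \eqref{alphaprimaL} of $T_{\hat Z}\L(n)$, so the $n+2$ lines are linearly independent. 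The only delicate point, and the one I would emphasise, is why these lines are no longer claimed to be \emph{orthogonal} as in Theorem \ref{Teorema_reglado}: the diagonal directions $\vec 1,\vec i$ have non-trivial inner product $\langle\!\langle \vec 1,\zeta e_j\rangle\!\rangle=\mathrm{Re}(\zeta)$ with the directions $\zeta e_j$ (and similarly for $iZ$), so in general they cannot all be made simultaneously orthogonal. This is the unique structural difference with the $\M(n)$ case.
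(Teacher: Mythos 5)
Your proposal follows the paper's own proof almost step for step: the paper likewise takes the two subspaces of $T_{\hat Z}\L(n)$ spanned by $\{\zeta e_2,\dots,\zeta e_n,\vec 1,\vec i\}$ and by $\{iZ,Z\}$, identifies them with $R:=\langle\R^{n-1}_{\hat Z}\cup\{\vec 0\},\cD_{\hat Z}\cup\{\vec 0\}\rangle$ (your $\cS$ is exactly $R$ minus the diagonal) and with $\C^{\ast}_{\hat Z}\cup\{\vec 0\}$, and then extracts $n+1$ lines from the first piece and the line $(1+ti)Z+\vec b$ from the second. So the decomposition and the source of the $n+2$ directions are identical.

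There is one concrete step that fails as written. You take the $n-1$ lines through $\hat Z$ in the coordinate directions $\zeta e_2,\dots,\zeta e_n$; but when $Z=\zeta(0,1,0,\dots,0)$ the line $s\mapsto\zeta(0,1+s,0,\dots,0)+\vec b$ passes through $\vec b\in\cD_n$ at $s=-1$, and $\cD_n$ is excluded from $\L(n)$, so that line is \emph{not} contained in $\L(n)$. The paper sidesteps this (as it already did in Theorem \ref{Teorema_reglado} with the clause ``that do not cross $\vec 0$'') by choosing $n+1$ orthogonal lines in $R$ that avoid the removed copy of $\cD_n$ --- a generic choice works since a line through $\hat Z$ meets the affine $2$-plane $\cD_n$ only for a measure-zero set of directions. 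Your argument needs the same adjustment in this degenerate case. A second, smaller imprecision: $\Phi$ is not an isometric embedding of the product $\R^{n-1}\times\cD_n$, precisely because of the cross terms $\langle\!\langle\zeta e_j,\vec 1\rangle\!\rangle=\mathrm{Re}(\zeta)$ that you yourself point out at the end; the conclusion survives because the image of $\Phi$ is in any case a flat affine $(n+1)$-plane of $\C^n$, hence isometric to $\R^{n+1}$, with an affine $2$-plane removed. With these two repairs your proof coincides with the paper's.
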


\begin{proof}
Let $\hat{Z}=Z+\vec{b} \in \mathbb{L}(n),$ with $Z=\zeta(0,c_2,c_3,\dots,c_n)\in\M(n)$ and $\vec{b}\in\cD_{n}$. The spaces generated by $$\{\alpha_2'(0),\alpha_3'(0),\dots,\alpha_{n+1}'(0),\alpha_{n+2}'(0)\}~~\text{and}~~\{\alpha_1'(0),Z=\alpha_2'(0)+c_3\alpha_3'(0)+\cdots+c_n\alpha_n'(0)\}$$ are contained in $T_{\hat{Z}}\L(n)$, where the $\alpha'_j$'s are the functions in equation \eqref{alphaprimaL}. Such spaces correspond to $R:=\langle\mathbb{R}^{n-1}_{\hat{Z}}\cup\{\vec{0}\},\mathcal{D}_{\hat{Z}}\cup\{\vec{0}\}\rangle$ and $\mathbb{C}^{\ast}_{\hat{Z}}\cup\{\vec{0}\}$, respectively. Notice that $R\smallsetminus \mathbb{L}(n)=\mathcal{D}_{\hat{Z}}\cong\mathbb{R}^{2}$. Then $R\smallsetminus\mathcal{D}_{\hat{Z}}\cong \mathbb{R}^{n+1}\smallsetminus\mathbb{R}^{2}$ and $\mathbb{C}_{\hat{Z}}^{\ast}\cong \mathbb{R}^{2}\smallsetminus\{0\}$ are contained in $\mathbb{L}(n)\cap T_{\hat{Z}}\mathbb{L}(n)$. Also, we can take $n+1$ orthogonal lines in $R\smallsetminus \mathcal{D}_{\hat{Z}}$ passing through $\hat{Z}$ and the line $(1+ti)Z+\vec{b}\subset\mathbb{C}_{\hat{Z}}^{\ast}$, obtaining $n+2$ lines through $\hat{Z}$ contained in $\mathbb{L}(n)$ that span a copy of $\mathbb{R}^{n+2}$.
\end{proof}

\section{Geodesic equations in $\M(n)$ and $\L(n)$}
\label{sec-5}

From the Remark \ref{spaces}, the geodesics on $\C_Z^{\ast}$ and $\R^{n-1}_Z$ are straight lines. On the other hand, $-Z\in\C_Z^{\ast}$ for all $Z\in\M(n)$, but the segment from $Z$ to $-Z$ is not contained in $\M(n)$ because it passes through $\vec{0}\in\cD_n$. We conclude that $\M(n)$ is not geodesically convex. Similarly happens with $\L(n)$. For the same reason,  $\M(n)$ and $\L(n)$ are not geodesically complete manifolds.

\subsection{Geodesic equations in $\M(n)$}
\label{EqM(n)}
  
Suppose that $\gamma\colon(-\varepsilon, \varepsilon)\to\cU_2$, $\gamma(t)=z(t)\big(0,1,r_3(t),\dots,r_{n}(t)\big)$ is a geodesic in $\M(n)$ with arbitrary initial conditions $\gamma(0)=Z$ and $\gamma'(0)\in T_Z\M(n)$.  

Since a geodesic $\gamma$ in a submanifold of $\C^n$ is characterized by having constant speed and by vanishing the orthogonal projection of $\gamma''(t)$  to the tangent space $T_{\gamma(t)}\M(n)$ for each $t$, our goal is to get equations equivalent to these conditions. We start with the last condition. This means that the acceleration vector \[\gamma''(t)=\big(0,z''(t),r_3''(t)z(t)+2r_3'(t)z'(t)+r_3(t)z''(t),\dots,r_n''(t)z(t)+2r_n'(t)z'(t)+r_n(t)z''(t)\big)\]
belongs to the orthogonal complement of the tangent space $T_{\gamma(t)}\M(n)$ in $\C^n$, \textit{i.e.} $\langle\!\langle\gamma''(t),V\rangle\!\rangle=0$ for any $V \in T_{\gamma(t)}\M(n)$. Then, the equations $\langle\!\langle\gamma''(t), v_j(t) \rangle\!\rangle=0$ with $v_j(t):=\alpha'_j(0)(t)$ and $\{v_1(t),v_2(t),\dots,v_n(t)\}\subset T_{\gamma(t)}\mathbb{M}(n)$ the orthogonal basis constructed in Subsection \ref{susection_tangent}, must to be satisfied. We proceed to compute these $n$ equations.

\begin{enumerate}[\bfseries I.]

\item $\langle\!\langle\gamma''(t),v_1(t) \rangle\!\rangle= \langle\!\langle\gamma''(t), i\gamma(t)\rangle\!\rangle=0$: We get $$\begin{array}{rcl}
0&=&z''(t)\pmb{\cdot}iz(t)+\sum_{j=3}^{n}\left([r_j''(t) z(t)+2r_j'(t)z'(t)+r_j(t) z''(t)]\pmb{\cdot}ir_j(t)z(t)\right)\\
&= & \dfrac{d}{dt}\left[\left(1+ \sum_{j=3}^n r_j(t)^2\right)[z'(t)\pmb{\cdot}iz(t)]\right]\,.\end{array}$$ Equivalently, there exists a constant $k_1\in\R$, such that
\begin{equation}
\label{GM1}
    \Big(1+\sum_{j=3}^n \big(r_j(t)\big)^2\Big)\big[z'(t)\pmb{\cdot}iz(t)\big]=k_1.
\end{equation}

\item $\langle\!\langle\gamma''(t),v_2(t)\rangle\!\rangle=\langle\!\langle\gamma''(t),z(t)e_{2}\rangle\!\rangle=0$: It is equivalent to
\begin{equation}
\label{GM2}
    z''(t)\pmb{\cdot}z(t)=0. 
\end{equation}
Then the curve $z\colon(-\varepsilon,\varepsilon)\to\C$ is orthogonal to its acceleration.

\item $\langle\!\langle\gamma''(t),v_j(t)\rangle\!\rangle=\langle\!\langle\gamma''(t),z(t)e_{j}\rangle\!\rangle=0$ for $3\le j\le n$: Then we have that 
\begin{equation}\begin{split}\label{GM3a}0=&r_j''(t)\lVert z(t)\rVert^2+2r_j'(t)(z'(t)\pmb{\cdot}z(t))+r_j(t)(z''(t)\pmb{\cdot}z(t))\\=&r_j''(t)\lVert z(t)\rVert^2+2r_j'(t)(z'(t)\pmb{\cdot}z(t)),\end{split}\end{equation}
where the last equality follows by applying the equation \eqref{GM2}. This is equivalent to
\begin{equation}
\label{GM3b}
r_j'(t)\,\lVert z(t)\rVert^2=k_j\,,
\end{equation}
where $k_j\in\R$ is constant. From these equations it follows that the functions $r_j\colon(-\varepsilon,\varepsilon)\to\R$ satisfy:

\begin{enumerate}[a)]
\item If $r_j'(t_0)=0$ for some $t_0\in(-\varepsilon, \varepsilon)$, then $r_j(t)$ is a constant function since $z(t)\neq 0$.

\item If $r_j'(t_0)\neq 0$ for some $t_0$, then $r_j(t)$ is a monotone function.

\item If $r_j'(t)\neq 0$, then for every $m\neq j$ such that $r_m'(t)\neq 0$, there is a constant $K_m\in\R$, such that $r_m(t)=\frac{k_m}{k_j}r_j(t)+K_m$.
\end{enumerate}

\item Since the speed $\langle\!\langle\gamma'(t), \gamma'(t) \rangle\!\rangle=\lVert \gamma'(t)\rVert^2=k_0$ is constant, we have that \begin{equation}\begin{split}\label{GM4}k_0=\sum_{j=3}^n(r_j'(t))^2\lVert z(t)\rVert^2+\sum_{j=3}^n 2r_j'(t)r_j(t)z(t)\pmb{\cdot}z'(t)+\left(1+\sum_{j=3}^nr_j^2(t)\right)\lVert z'(t)\rVert^2.\end{split}\end{equation} 
Using equation \eqref{GM3b} and the procedure to obtain it, we get
\begin{equation}
\label{Eq4}
\dfrac{k_0-\sum_{j=3}^n k_j\Big(\dfrac{r_j(t)}{r'_j(t)} \Big)'r'_j(t)}{1+\sum_{j=3}^n r_j(t)^2}=\lVert z'(t)\rVert^2 .
\end{equation}

\end{enumerate}

It is easy to check that straight lines satisfy these conditions. Unfortunately, we could not find other solutions than straight lines. This suggests that the manifold $\M(n)$ is very ``crooked'' inside the complex subspace $V_n=\{(0,z_2,\dots,z_n)\in\C^n\}$. Even in the case of 3-segments, we fail to find a geodesic in $\M(3)$ different than lines. However, it is possible to know nontrivial geodesics in $\M(n)$ from a nontrivial geodesic in $\M(3)$.

\begin{thm}
Let $\beta(t)=z(t)\big(0,1,r(t)\big)\subset\cU_2$ be a geodesic of $\M(3)$ that is not a straight line, and let  $$\gamma(t)=z(t)\big(0,1,a_{3}s_3(t),a_{4}s_4(t),\dots,a_{n}s_n(t)\big)\subset \mathbb{M}(n),$$ be a curve, where $a_{j}\in \mathbb{R}$ is constant and $s_{j}(t)\equiv 1$ or $s_{j}(t)\equiv r(t)$. Then $\gamma(t)$ is a geodesic in $\M(n)$ if and only if $$1+\sum_{j\in A_{1}}a_{j}^{2}=\sum_{j\in A_{2}}a_{j}^{2},$$ where $A_{1}=\{3\leq j\leq n\colon s_{j}(t)\equiv 1\}$ and $A_{2}=\{3\leq j\leq n\colon s_{j}(t)\equiv r(t)\}$.
\end{thm}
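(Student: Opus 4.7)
The plan is to plug $\gamma$ into the four geodesic conditions (I)--(IV) derived in Subsection \ref{EqM(n)} and show they hold if and only if $1+\sum_{j\in A_1}a_j^2=\sum_{j\in A_2}a_j^2$. The functions $z(t)$ and $r(t)$ coming from $\beta$ already satisfy the corresponding three conditions for $\M(3)$, so the task reduces to transferring those identities to the larger system.

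Since $\gamma$ shares its complex component $z(t)$ with $\beta$, equation (II), $z''(t)\pmb{\cdot}z(t)=0$, is inherited without any extra assumption. Each real coordinate of $\gamma$ equals $r_j(t)=a_j s_j(t)$, so $r_j'(t)=0$ for $j\in A_1$ while $r_j'(t)=a_j r'(t)$ for $j\in A_2$. Using equation (III) for $\beta$, namely $r'(t)\lVert z(t)\rVert^2=k_3$, one gets $r_j'(t)\lVert z(t)\rVert^2=a_j k_3$ for $j\in A_2$ and $0$ for $j\in A_1$, both constants. Therefore equation (III) for $\gamma$ holds unconditionally.

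The core of the theorem lies in equation (I). A direct computation yields
$$1+\sum_{j=3}^n r_j(t)^2=\Bigl(1+\sum_{j\in A_1}a_j^2\Bigr)+\Bigl(\sum_{j\in A_2}a_j^2\Bigr)r(t)^2,$$
so equation (I) for $\gamma$, combined with $(1+r(t)^2)[z'(t)\pmb{\cdot}iz(t)]=k_1$ for $\beta$, amounts to the constancy in $t$ of the ratio $(1+\sum_{j=3}^n r_j(t)^2)/(1+r(t)^2)$. The hypothesis that $\beta$ is not a straight line is needed to exclude two degeneracies: first, if $r(t)$ were constant, then equation (III) would give $k_3=0$ and (II) would force $z(t)$ to be affine in $\C$, making $\beta$ a line in $\C_Z^{\ast}$; second, if $z'\pmb{\cdot}iz\equiv 0$, then $z(t)$ is a real rescaling of a fixed complex number and a parallel analysis using (II)--(III) shows $\beta$ is again a straight line. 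With $r(t)$ non-constant and $k_1\neq 0$, constancy of the ratio is equivalent to equating the constant and $r^2$ coefficients of numerator and denominator, which produces precisely the identity $1+\sum_{j\in A_1}a_j^2=\sum_{j\in A_2}a_j^2$.

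Finally, setting $A:=\sum_{j\in A_2}a_j^2=1+\sum_{j\in A_1}a_j^2$, a short expansion of the speed formula (IV) applied to $\gamma$ collapses to $\lVert\gamma'(t)\rVert^2=A\,\lVert\beta'(t)\rVert^2$, which is constant by the constant speed of $\beta$, completing the verification of the geodesic system. The main obstacle is the careful handling of the two degenerate sub-cases in the ``only if'' direction, ensuring that equation (I) for $\gamma$ genuinely forces the quadratic relation rather than being vacuously satisfied.
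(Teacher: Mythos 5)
Your proposal is correct and follows essentially the same route as the paper: conditions II and III transfer unconditionally, condition I reduces to the constancy of the ratio $\bigl(A+A'r(t)^2\bigr)/\bigl(1+r(t)^2\bigr)$ with $A=1+\sum_{j\in A_1}a_j^2$ and $A'=\sum_{j\in A_2}a_j^2$, which under $r$ non-constant and $k_1\neq 0$ forces $A=A'$, and condition IV collapses to $\lVert\gamma'\rVert^2=Ak_0$. Your explicit exclusion of the two degeneracies ($r$ constant, or $z'\pmb{\cdot}iz\equiv 0$) is a slightly more detailed packaging of the paper's observation that $z''\not\equiv 0$ together with $z''\pmb{\cdot}z=0$ gives $k_1\neq 0$, but the substance is the same.
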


\begin{proof}
   The geodesic $\beta(t)=\big(0,z(t),r(t)z(t)\big)\subset\cU_2$ satisfies the equations
\smallskip
$$0=z''(t) \pmb{\cdot}z(t),\qquad k=r'(t)\lVert z(t)\rVert^2,\qquad k_1=\left[z'(t)\pmb{\cdot} iz(t)\right]\left(1+r(t)^2\right)$$
$$k_0=\left(1+r^2(t)\right)\lVert z'(t)\rVert^2+(r'(t))^2\lVert z(t)\rVert^2+2r'(t)r(t)\left(z(t)\pmb{\cdot} z'(t)\right),$$
where $k,k_1,k_0\in\R$ and $k_0>0$.

Notice that \[\frac{d}{dt}[z'(t)\pmb{\cdot} iz(t)]=z''(t)\pmb{\cdot} iz(t).\] Since $z(t)$ is not a straight line we know that $z''(t)$ is not identically $0$. From this and $z''(t)\pmb{\cdot} z(t)=0$, we conclude $z''(t)\pmb{\cdot} iz(t)$ is not identically $0$ and therefore $z'(t)\pmb{\cdot} iz(t)$ is not constant, and then $k_{1}\neq 0$.

$\Rightarrow]$ Let $A:=1+\sum_{j\in A_{1}}a_{j}^{2}$ and $A':=\sum_{j\in A_{2}}a_{j}^{2}$. From condition \textbf{I} for $\gamma(t)$ and conditions on $\beta(t)$ we have 
\begin{equation}\label{k1k1}\frac{k_1}{1+\left(r(t)\right)^2}=z'(t)\pmb{\cdot} iz(t)=\frac{\kappa_1}{A+A'r(t)^2}\end{equation}
for some $\kappa_{1}\in \mathbb{R}$. Since $r(t)$ is not constant (in other case, $\beta(t)$ would be a straight line), it follows that $A=A'$.

$\Leftarrow]$ It is enough to show that the curve $\gamma(t)$ satisfies the conditions \textbf{I}, \textbf{II}, \textbf{III} and \textbf{IV}.

\textit{Condition \textbf{I}:} It follows from equation \eqref{k1k1} and $A=1+\sum_{j\in A_{1}}a_{j}^{2}=\sum_{j\in A_{2}}a_{j}^{2}=A'$, where $\kappa_{1}=Ak_{1}$.

\textit{Condition \textbf{II}:} The equation of this condition is identical for $\beta(t)$ and $\gamma(t)$.

\textit{Condition \textbf{III}:} We consider first the case when $s_{j}(t)\equiv 1$. As $r_{j}'(t)\equiv 0$ we conclude that $r_{j}'(t)\lVert z(t)\rVert^{2}$ is the constant function $0$.

Now, we consider the case when $s_{j}(t)\equiv r(t)$. In this case the condition is also satisfied since $a_{j}s_{j}'(t)\lVert z(t)\rVert^{2}=a_{j}k$ is constant.

\textit{Condition \textbf{IV}:} Notice that
\begin{align*}\lVert \gamma'(t)\rVert^{2}&=\lVert z'(t)\rVert^2+\sum_{j=3}^{n}\lVert (a_{j}s_{j}(t)z(t))'\rVert^2\\
&=\lVert z'(t)\rVert^2+\sum_{j\in A_{1}}\lVert (a_{j}z(t))'\rVert^2+\sum_{j\in A_{2}}\lVert (a_{j}r(t)z(t))'\rVert^2\\
&=\left(1+\sum_{j\in A_{1}}a_{j}^{2}\right)\lVert z'(t)\rVert^2+\sum_{j\in A_{2}}a_{j}^2\lVert (r(t)z(t))'\rVert^2\\ 
&=A\left(\lVert z'(t)\rVert^{2}+r^{2}(t)\lVert z'(t)\rVert^{2}+2r'(t)r(t)(z(t)\pmb{\cdot} z'(t))+(r'(t))^{2}\right)=Ak_{0},\end{align*}
which is constant.
\end{proof}

\subsection{Geodesic equations in $\L(n)$}

Now, let $\hat{\gamma}(t)=\gamma(t)+\vec{\lambda}(t)$ be a geodesic in $\hat{\cU}_2 \subset \L(n)$, with $\gamma(t)\subset\cU_2\subset\M(n)$, and let $\vec{\lambda}(t)$ be a curve contained in $\mathcal{D}_{n}$. If we denote $v_j(t):=\hat{\alpha}'_j(0)(t)$ for all $t\in(-\varepsilon,\varepsilon)$, then $\{v_1(t),v_2(t),\dots,v_{n+2}(t)\}$ is a basis of $T_{\hat{\gamma}(t)}\mathbb{L}(n)$. The geodesic conditions (analogous to the previous section) generate the following equations:

\begin{enumerate}[\bfseries I'.]
\item From $\langle\!\langle\hat{\gamma}''(t),v_1(t)\rangle\!\rangle=\langle\!\langle {\gamma}''(t)+\vec{\lambda}''(t), i \gamma(t)  \rangle\!\rangle=0$, we get
\begin{equation}
\label{GL1}
0=\Big[\Big(1+\sum_{j=3}^{n} r_j^2(t)\Big)z''(t)+ 2\Big(\sum_{j=3}^{n}r_j(t)r'
_j(t) \Big)z'(t)+\Big(1+\sum_{j=3}^n r_j(t)\Big)\lambda''(t)\Big]\pmb{\cdot} iz(t)     
\end{equation}

\item $\langle\!\langle\hat{\gamma}''(t),v_2(t)\rangle\!\rangle=\langle\!\langle {\gamma}''(t)+\vec{\lambda}''(t), z(t) e_2\rangle\!\rangle=0$: It is equivalent to
\begin{equation}
\label{GL2}
    z''(t)\pmb{\cdot} z(t) + \lambda''(t) \pmb{\cdot} z(t)=0. 
\end{equation}

\item $\langle\!\langle\hat{\gamma}''(t),v_j(t)\rangle\!\rangle=\langle\!\langle {\gamma}''(t)+\vec{\lambda}''(t), z(t) e_j\rangle\!\rangle=0$ for ${3\leq j\leq n:}$ From here and equation \eqref{GL1} we have that 
\begin{equation}
\label{GL3}
0=z(t) \pmb{\cdot} [r_j''(t) z(t) + 2 r_j'(t)z'(t)+(r_j(t)-1)z''(t)]
\end{equation}

\item $\langle\!\langle\hat{\gamma}''(t),v_{n+1}(t)\rangle\!\rangle=\langle\!\langle\hat{\gamma}''(t),v_{n+2}(t)\rangle\!\rangle=0$: From here we obtain that
\begin{equation}
0=\Big(1+\sum_{j=3}^{n} r_j(t)\Big)z''(t)+ 2\Big(\sum_{j=3}^{n} r'_j(t)\Big)z'(t)+\Big(\sum_{j=3}^{n}r''_j(t)\Big)z(t)+n\lambda''(t) 
\end{equation}

\item $\langle\!\langle\hat\gamma'(t), \hat\gamma'(t) \rangle\!\rangle=\lVert \gamma'(t)+\vec{\lambda}'(t)\rVert^2=k_0$ is constant: We have that

\begin{equation}\begin{split}
    \sum_{j=3}^n(r_j'(t))^2\lVert z(t)\rVert^2+\sum_{j=3}^n 2r_j'(t)r_j(t)z(t)\pmb{\cdot} z'(t)+\left(1+\sum_{j=3}^nr_j^2(t)\right)\lVert z'(t)\rVert^2\\
    +\left(\sum_{j=3}^{n}r_{j}'(t)\right)z(t)\pmb{\cdot} \lambda'(t)+\left(1+\sum_{j=3}^{n}r_{j}(t)\right)z'(t)\pmb{\cdot} \lambda'(t)+n\lVert\lambda'(t)\rVert^2=k_{0},\end{split}\end{equation} which is constant.
\end{enumerate}

\begin{prop}
    Let $\gamma(t)$ be a geodesic in $\mathbb{M}(n)$. Then $\hat{\gamma}(t)=\gamma(t)+\vec{\lambda}(t)$ is a geodesic in $\mathbb{L}(n)$ (where $\vec{\lambda}(t)\in \mathcal{D}_{n}$) if and only if $\lambda''(t)\equiv 0$, $\langle\!\langle \gamma''(t),\vec{1}\rangle\!\rangle\equiv 0$ and $\langle\!\langle \gamma''(t),\vec{i}\rangle\!\rangle\equiv 0$.
\end{prop}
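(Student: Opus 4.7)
The plan is to characterize when $\hat\gamma(t)=\gamma(t)+\vec\lambda(t)$ satisfies the geodesic conditions \textbf{I'}--\textbf{IV'} of Section \ref{sec-5}, given that $\gamma$ already satisfies conditions \textbf{I}--\textbf{IV}. I work in the chart $\gamma(t)=z(t)(0,1,r_3(t),\dots,r_n(t))$ with $\vec\lambda(t)=(\lambda(t),\dots,\lambda(t))$. First observe the key identity $\sum_{j=1}^n\gamma_j''=\langle\!\langle\gamma'',\vec 1\rangle\!\rangle+i\,\langle\!\langle\gamma'',\vec i\rangle\!\rangle$ as a complex number, so the two hypotheses $\langle\!\langle\gamma'',\vec 1\rangle\!\rangle\equiv\langle\!\langle\gamma'',\vec i\rangle\!\rangle\equiv 0$ are equivalent to the single complex identity $\sum_j\gamma_j''\equiv 0$.

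For the \emph{if} direction, I substitute $\lambda''\equiv 0$ into \textbf{I'}--\textbf{IV'}: in \eqref{GL1}, the first two summands add to $\frac{d}{dt}\bigl[(1+\sum r_j^2)(z'\pmb{\cdot}iz)\bigr]=0$ by \eqref{GM1}, while the third vanishes; \eqref{GL2} and \eqref{GL3} reduce immediately to \eqref{GM2} and \eqref{GM3a}; and \textbf{IV'} becomes $\sum_j\gamma_j''=0$, which is the hypothesis. (Constant speed \textbf{V'} then follows from the orthogonality of the acceleration to the tangent space.) For the \emph{only if} direction, I subtract the $\gamma$-equations from the $\hat\gamma$-equations: \eqref{GL2} minus \eqref{GM2} yields $\lambda''\pmb{\cdot}z=0$; \eqref{GL1} minus the derivative of \eqref{GM1} yields $(1+\sum_{j\geq 3}r_j)(\lambda''\pmb{\cdot}iz)=0$; and \textbf{IV'} reads $\sum_j\gamma_j''+n\lambda''=0$. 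Since $z(t)\neq 0$, the first relation lets me write $\lambda''(t)=\beta(t)\,iz(t)$ for a continuous real function $\beta$.

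The main obstacle is to conclude that $\beta\equiv 0$, because at points where $1+\sum_{j\geq 3}r_j=0$ the second relation is trivial and gives no information on $\beta$. I argue by contradiction: if $\beta(t^*)\neq 0$, continuity provides a neighborhood $U$ of $t^*$ on which $\beta\neq 0$, so the second relation forces $1+\sum_{j\geq 3}r_j\equiv 0$ on $U$; differentiating yields $\sum r_j'\equiv\sum r_j''\equiv 0$ on $U$, and hence
\[
\sum_{j=1}^n\gamma_j''=\Bigl(1+\sum r_j\Bigr)z''+2\Bigl(\sum r_j'\Bigr)z'+\Bigl(\sum r_j''\Bigr)z\;\equiv\; 0 \quad\text{on }U.
\]
Then \textbf{IV'} forces $n\lambda''\equiv 0$ on $U$, contradicting $\beta\neq 0$ there. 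Therefore $\lambda''\equiv 0$ identically, and \textbf{IV'} gives $\sum_j\gamma_j''\equiv 0$, which is equivalent to the two orthogonality conditions of the statement.
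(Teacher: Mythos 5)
Your proof is correct and follows essentially the same route as the paper's: both reduce to the system \textbf{I'}--\textbf{IV'}, extract the product identity $\bigl(1+\sum_{j\ge 3}r_j\bigr)\bigl(\lambda''\pmb{\cdot}iz\bigr)=0$ together with $\lambda''\pmb{\cdot}z=0$, and close the degenerate case via condition \textbf{IV'}. Your version is in fact slightly tighter in two places: the open-neighborhood contradiction handles the pointwise (rather than global) nature of the dichotomy more carefully than the paper's two-case split, and you observe that constant speed is automatic once $\hat\gamma''$ is normal to the tangent space, whereas the paper verifies \textbf{V'} by hand.
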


\begin{proof} $\Rightarrow]$ Condition \textbf{I} on $\gamma$ and Condition \textbf{I'} on $\hat{\gamma}$ imply that 
$$\left(1+\sum_{j=3}^{n}r_{j}(t)\right)\lambda''(t)\pmb{\cdot} iz(t)=0.$$ 
We consider two cases:

\textit{Case $1+\sum_{j=3}^{n}r_{j}(t)=0$}: In this case it is fulfilled that the derivative of $1+\sum_{j=3}^{n}r_{j}(t)$ is identically 0. We use this in Condition \textbf{IV'} to obtain that $\lambda''(t)\equiv 0$.

\textit{Case $\lambda''(t)\pmb{\cdot} iz(t)=0$}: From conditions \textbf{II} and \textbf{II'} we deduce that $\lambda''(t)\pmb{\cdot} z(t)=0$. Since $z(t)\not =0$, we conclude that $\lambda''(t)\equiv 0$.

Then, in both cases, we get that $\lambda''(t)\equiv 0$. From last equation and Condition \textbf{IV'}, we get that $\langle\!\langle \gamma''(t),v_{n+1}(t)\rangle\!\rangle=\langle\!\langle \gamma''(t),v_{n+2}(t)\rangle\!\rangle\equiv 0$. Since $v_{n+1}(t)\perp v_{n+2}(t)$ we conclude that $\langle\!\langle \gamma''(t),\vec{1}\rangle\!\rangle\equiv 0$ and $\langle\!\langle \gamma''(t),\vec{i}\rangle\!\rangle\equiv 0$.

$\Leftarrow ]$ Notice that conditions \textbf{I'}, \textbf{II'} and \textbf{III'} are implied by conditions \textbf{I}, \textbf{II} and \textbf{III} and from the fact that $\lambda''(t)\equiv 0$. Also, condition \textbf{IV'} follows from $\lambda''(0)\equiv 0$, $\langle\!\langle \gamma''(t),\vec{1}\rangle\!\rangle\equiv 0$ and $\langle\!\langle \gamma''(t),\vec{i}\rangle\!\rangle\equiv 0$.

Condition \textbf{V'} is equivalent to prove that $\lVert \gamma'(t) \rVert^2+\lVert \vec{\lambda}'(t) \rVert^2+2\langle\!\langle \gamma'(t),\vec{\lambda}'(t)\rangle\!\rangle$ is a constant. Since $\gamma$ is a geodesic and $\lambda''\equiv 0$, then $\lVert\gamma'(t)\rVert$ and $\lVert\vec\lambda'(t)\rVert$ are constant functions. Also, since $\lambda'(t)\equiv a$ for some $a\in \C$, we have that $\langle\!\langle\gamma'(t),\hat\lambda'(t)\rangle\!\rangle=\langle\!\langle \gamma'(t),\vec{a}\rangle\!\rangle$. Finally, condition \textbf{V'} is satisfied since $$\frac{d}{dt}\langle\!\langle \gamma'(t),\vec{a}\rangle\!\rangle=\langle\!\langle \gamma''(t),\vec{a}\rangle\!\rangle+\langle\!\langle \gamma'(t),\frac{d}{dt}\vec{a}\rangle\!\rangle=\langle\!\langle \gamma''(t),\vec{a}\rangle\!\rangle=0.$$
\end{proof}

\section{The orbifold $\mathscr{L}(n)$}
\label{sec-6}

In this section, we prove that quotient $\mathscr{L}(n)$ (defined in the Introduction) is a spherical orbifold and describe its singular locus, which consists of the singular points. Moreover, when $n$ is a prime number, we prove that $\mathscr{L}(n)$ is a lens space.

\begin{rem}
The ends of an $n$-segment are the vertices at the boundary of the segment. The $n$-segments have at least two ends, for example, the $5$-segment $(-3-3i,0,2+2i,1+i,2+2i)$ has three ends. If $z_m$ and $z_M$ are different ends of $Z$, then $(Z-\vec{z}_m)/(z_M-z_m)$ and $(Z-\vec{z}_M)/(z_m-z_M)$ are the representatives of $Z$ with ends at $0$ and $1$. Since $\eta(\mathbb{L}(n))\cong \mathbb{RP}^{n-2}\cong \mathbb{S}^{n-2}/\{X,-X\}$ and the space of $n$-segments with ends at $0$ and $1$ is a double cover of $\eta(\mathbb{L}(n))$, we conclude this space is homeomorphic to $\mathbb{S}^{n-2}$.\end{rem}

A lifting of the diffeomorphism $\mu\colon\eta(\L(n))\to\eta(\L(n))$ to $V_n \cap \mathbb{R}^n$ is the function $\hat{\mu}(0,x_2,x_3,\dots,x_n)=(0,x_3-x_2,\dots,x_n-x_2,-x_2)$ (\textit{i.e.} $\mu\circ p=p\circ\hat{\mu}$). From now on, we identify $V_{n}\cap \mathbb{R}^{n}$ with $\R^{n-1}$ through the projection over the last $n-1$ coordinates, and $\cM$ denote the $(n-1)\times (n-1)$ linear matrix associated to $\hat{\mu}$.

\begin{prop}
\label{eigen}
The eigenvalues of $\cM$ are $e^{\frac{2\pi i}{n}}, e^{2\frac{2\pi i}{n}}, \dots, e^{(n-1)\frac{2\pi i}{n}}$, and the eigenvectors are$$B_k=(e^{k\frac{2\pi i}{n}}-1,e^{2k\frac{2\pi i}{n}}-1,\dots,e^{(n-1)k\frac{2\pi i}{n}}-1),~\text{for}~k\in\{1,2,\dots,n-1\}.$$
\end{prop}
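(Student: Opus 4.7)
The plan is to verify the eigenvector equation $\mathcal{M}B_k=e^{k\frac{2\pi i}{n}}B_k$ directly. First I would write $\mathcal{M}$ explicitly in the standard basis of $\mathbb{R}^{n-1}\cong V_n\cap\mathbb{R}^n$ (identified via projection onto the last $n-1$ coordinates). From
\[\hat{\mu}(0,x_2,x_3,\dots,x_n)=(0,x_3-x_2,\dots,x_n-x_2,-x_2),\]
we read off that, with respect to the coordinates $(x_2,x_3,\dots,x_n)$, the first column of $\mathcal{M}$ is $(-1,-1,\dots,-1)^T$, while for $2\leq j\leq n-1$ the $j$-th column is the standard basis vector $e_{j-1}$. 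In other words, $(\mathcal{M}v)_m=v_{m+1}-v_1$ for $1\leq m\leq n-2$ and $(\mathcal{M}v)_{n-1}=-v_1$.

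Next, I would set $\omega=e^{\frac{2\pi i}{n}}$ and, for each fixed $k\in\{1,\dots,n-1\}$, substitute $B_k=(\omega^k-1,\omega^{2k}-1,\dots,\omega^{(n-1)k}-1)$ into this formula. For $1\leq m\leq n-2$, the $m$-th component of $\mathcal{M}B_k$ becomes
\[(\omega^{(m+1)k}-1)-(\omega^k-1)=\omega^{(m+1)k}-\omega^k=\omega^k(\omega^{mk}-1),\]
which is exactly $\omega^k$ times the $m$-th component of $B_k$. For the last component, one gets $-(\omega^k-1)=1-\omega^k$, which has to be compared with $\omega^k$ times the last entry of $B_k$, namely $\omega^k(\omega^{(n-1)k}-1)=\omega^{nk}-\omega^k=1-\omega^k$, using $\omega^n=1$. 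Both agree, so $\mathcal{M}B_k=\omega^k B_k$.

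Finally, since the numbers $\omega,\omega^2,\dots,\omega^{n-1}$ are pairwise distinct, the corresponding eigenvectors $B_1,\dots,B_{n-1}$ are automatically linearly independent; being $n-1$ vectors in an $(n-1)$-dimensional space, they form a basis, and therefore the spectrum of $\mathcal{M}$ is exactly $\{\omega,\omega^2,\dots,\omega^{n-1}\}$ with the stated eigenvectors.

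The computation itself is straightforward; the only subtlety is the separate treatment of the last row of $\mathcal{M}$, which is precisely where the identity $\omega^n=1$ enters and forces the eigenvalues to be $n$-th roots of unity. (One could alternatively derive the result by first observing that $\mathcal{M}^n=I$ since $\hat{\mu}$ lifts the order-$n$ map $\mu$, and then ruling out $\lambda=1$ as an eigenvalue by a direct check, concluding that the characteristic polynomial equals $1+t+\cdots+t^{n-1}$; but the direct verification above is shorter and simultaneously produces the eigenvectors.)
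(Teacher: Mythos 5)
Your proof is correct, and it inverts the logical order of the paper's argument in a way that makes it slightly more economical. The paper first computes the characteristic polynomial of $\cM$ by induction (expanding by minors along the last row), obtains $x^{n-1}+\cdots+x+1$, concludes that the eigenvalues are the $n$-th roots of unity other than $1$, and only then verifies $\cM B_k=e^{k\frac{2\pi i}{n}}B_k$ as a check. You skip the determinant computation entirely: you write out the action $(\cM v)_m=v_{m+1}-v_1$, $(\cM v)_{n-1}=-v_1$ explicitly, verify the eigenvector equation directly (correctly isolating the last row as the place where $\omega^n=1$ is used), and then recover the full spectrum by counting --- $n-1$ distinct eigenvalues of an $(n-1)\times(n-1)$ matrix leave no room for anything else. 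What the paper's route buys is the characteristic polynomial itself as an explicit byproduct; what yours buys is the avoidance of an inductive determinant argument, replaced by a two-line linear-algebra observation. One small point you leave implicit: to call the $B_k$ eigenvectors (and hence linearly independent) you should note they are nonzero, which is immediate since the first entry $e^{k\frac{2\pi i}{n}}-1$ does not vanish for $1\leq k\leq n-1$.
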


\begin{proof}
Using induction and proceeding by minors in the last row, it can be shown that the characteristic polynomial of $\cM$ is the cyclothomic polynomial $x^{n-1}+\cdots+x+1$. Then the eigenvalues are the $n$-roots of the unity different from 1. It is straightforward to check that $\cM(B_k)=e^{k\frac{2\pi i}{n}}B_k$, for all $k\in\{1,2,\dots,n-1\}$.
\end{proof}

\begin{defi}
\label{matrix}
Let $R_{\theta}\in GL_2(\R)$ denote the rotation matrix of angle $\theta\in[0,2\pi]$ in the counter-clockwise. We denote $\cR_n\in GL_{n-1}(\R)$ as the following matrix:
\begin{enumerate}[a.]
\item for odd $n$, $\cR_n$ has $2\times2$ size blocks in the diagonal given by $R_{1\cdot\frac{2\pi}{n}},R_{2\cdot\frac{2\pi}{n}},\dots,R_{\frac{n-1}{2}\cdot\frac{2\pi}{n}}$, and zeros in the remaining entries.
\item for even $n$, $\cR_n$ has blocks in the diagonal given by $R_{1\cdot\frac{2\pi}{n}},R_{2\cdot\frac{2\pi}{n}},\dots,R_{\frac{n-2}{2}\cdot\frac{2\pi}{n}}$, $-1$, and zeros in the remaining entries.
\end{enumerate}
\end{defi}

By Proposition \ref{eigen}, the isomorphisms $\cM$ and $\cR_n$ are conjugated, actually, $\cR_n=\cB_n^{-1}\cM\cB_n$, where $\cB_n\in GL_{n-1}(\R)$ is the matrix with ordered columns $C_1,S_1,\dots,$ $C_{\frac{n-1}{2}},S_{\frac{n-1}{2}}$, given by 
\begin{equation}\label{eq1}
    C_{j}=\begin{pmatrix}
        \cos\frac{2\pi j}{n}-1 \\
        \cos\frac{2\cdot 2\pi j}{n}-1 \\
        \vdots \\
        \cos\frac{(n-1)2\pi j}{n}-1 \\
    \end{pmatrix}\qquad\qquad
    S_{j}=\begin{pmatrix}
        -\sin\frac{2\pi j}{n}-1 \\
        -\sin\frac{2\cdot 2\pi j}{n}-1 \\
        \vdots \\
        -\sin\frac{(n-1)2\pi j}{n}-1 \\
    \end{pmatrix}
\end{equation}
for odd $n$, and $C_{1}$, $S_{1}$, $\dots$, $C_{\frac{n-2}{2}}, S_{\frac{n-2}{2}}, -C_{\frac{n}{2}}$ for even $n$. 

We denote $\langle g,h\rangle$ the group generated by the elements $g$ and $h$, and $\nu(X)=-X$.

\begin{thm}
\label{quotient1}
The space $\mathscr{L}(n)$ is homeomorphic to the sperical orbifold $\S^{n-2}\big/\langle\cR_n,\nu\rangle$.
\end{thm}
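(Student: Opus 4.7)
The plan is to carry out a chain of quotient identifications, each of them a homeomorphism, starting from the definition of $\mathscr{L}(n)$ and ending at $\mathbb{S}^{n-2}/\langle\mathcal{R}_n,\nu\rangle$.

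First I would use the identification mentioned in Subsection \ref{1A}: every $n$-segment is equivalent, under the action of $\mathcal{A}_{\mathbb{C}}$, to one with vertices in $V_n\cap\mathbb{R}^n$, and two such representatives coincide in $\eta(\mathbb{L}(n))$ iff they differ by a nonzero real factor. Identifying $V_n\cap\mathbb{R}^n$ with $\mathbb{R}^{n-1}$ via projection onto the last $n-1$ coordinates, this gives
\[
\eta(\mathbb{L}(n))\;\cong\;(\mathbb{R}^{n-1}\smallsetminus\{\vec{0}\})/\mathbb{R}^{\ast}\;\cong\;\mathbb{S}^{n-2}/\langle\nu\rangle.
\]
Let $p\colon\mathbb{R}^{n-1}\smallsetminus\{\vec{0}\}\to\eta(\mathbb{L}(n))$ denote the projection. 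Because the lifting $\hat{\mu}$ is linear and satisfies $\mu\circ p=p\circ\hat{\mu}$, the action of $\langle\mu\rangle$ on $\eta(\mathbb{L}(n))$ lifts to the action of $\langle\hat{\mu}\rangle$ on $\mathbb{R}^{n-1}\smallsetminus\{\vec{0}\}$, commuting with $\mathbb{R}^{\ast}$-scaling. Combining both quotients,
\[
\mathscr{L}(n)\;\cong\;(\mathbb{R}^{n-1}\smallsetminus\{\vec{0}\})\big/\langle\hat{\mu},\,\mathbb{R}^{\ast}\rangle.
\]

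The key step is a linear change of coordinates that turns $\hat{\mu}$ into an orthogonal transformation. By Proposition \ref{eigen} and the explicit formulas \eqref{eq1}, the matrix $\mathcal{B}_n\in GL_{n-1}(\mathbb{R})$ satisfies $\mathcal{R}_n=\mathcal{B}_n^{-1}\mathcal{M}\mathcal{B}_n$. Then $\mathcal{B}_n$ defines a homeomorphism of $\mathbb{R}^{n-1}\smallsetminus\{\vec{0}\}$ that conjugates $\hat{\mu}$ to $\mathcal{R}_n$ and commutes with $\mathbb{R}^{\ast}$-scaling (since it is linear). Hence
\[
(\mathbb{R}^{n-1}\smallsetminus\{\vec{0}\})\big/\langle\hat{\mu},\mathbb{R}^{\ast}\rangle\;\cong\;(\mathbb{R}^{n-1}\smallsetminus\{\vec{0}\})\big/\langle\mathcal{R}_n,\mathbb{R}^{\ast}\rangle.
\]

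Finally, I would pass to the unit sphere. By Definition \ref{matrix}, $\mathcal{R}_n\in O(n-1)$: it is block-diagonal with $2\times 2$ rotation blocks and (for even $n$) a $-1$ block. Both $\mathcal{R}_n$ and $\nu=-I$ preserve $\mathbb{S}^{n-2}$ and commute with each other, so $\langle\mathcal{R}_n,\nu\rangle$ is a finite subgroup of $O(n-1)$ acting on $\mathbb{S}^{n-2}$. Writing $\mathbb{R}^{\ast}=\mathbb{R}_{>0}\cdot\langle\nu\rangle$ and quotienting first by the free $\mathbb{R}_{>0}$-action by taking norm-one representatives, we obtain
\[
(\mathbb{R}^{n-1}\smallsetminus\{\vec{0}\})\big/\langle\mathcal{R}_n,\mathbb{R}^{\ast}\rangle\;\cong\;\mathbb{S}^{n-2}\big/\langle\mathcal{R}_n,\nu\rangle,
\]
which is the claimed spherical orbifold.

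The main obstacle is verifying that each step in the chain is a genuine homeomorphism of quotient spaces rather than a mere bijection: I would need to check at each stage that the group actions descend compatibly (which follows since $\hat{\mu}$, $\mathcal{B}_n$, $\nu$, and $\mathbb{R}^{\ast}$-scaling are all linear and hence pairwise intertwinable) and that the resulting bijections are continuous in both directions. Once the commutation of the actions is established, the topological statement is formal.
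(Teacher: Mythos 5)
Your proposal is correct and takes essentially the same route as the paper's (very brief) proof: identify $\eta(\L(n))$ with $\R\P^{n-2}\cong\S^{n-2}/\langle\nu\rangle$, lift $\mu$ to the linear map $\cM$ on $\R^{n-1}$, and use the conjugation $\cR_n=\cB_n^{-1}\cM\cB_n$ to rewrite the quotient as $\S^{n-2}/\langle\cR_n,\nu\rangle$. If anything, your write-up is more careful than the paper's, since by keeping the $\R^{*}$-scaling in the quotient until after conjugating by $\cB_n$ you address the point that $\cM$ itself is not orthogonal and so does not act on $\S^{n-2}$ directly.
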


\begin{proof}
Since $\cM$ is the lifting of $\mu$ to $\R^{n-1}$ and it is conjugated to $\cR_n$, then the quotient $\mathscr{L}(n)=\eta(\L(n))\big/\langle\mu\rangle=\left(\S^{n-2}\big/\langle\nu\rangle\right)\big/\langle\mu\rangle$ is homeomorphic to $\S^{n-2}\big/\langle\cR_n,\nu\rangle$. It is clear that $\langle\cR_n,\nu\rangle$ is a subgroup of the orthogonal group $O(n-1)$ (Definition \ref{matrix}).
\end{proof}

\begin{defi}
    Let $q$ be a positive integer. If $p_1,p_2,\dots,p_m$ are integers such that $gcd(p_1,p_2,\dots,p_m,q)=1$, and $G\colon\C^m\to\C^m$ is given by$$G(z_1,z_2,\dots,z_m)=\big(e^{2\pi i\frac{p_1}{q}}z_1,e^{2\pi i\frac{p_2}{q}}z_2,\dots,e^{2\pi i\frac{p_m}{q}}z_m\big),$$then the quotient $L_{q}(p_1,p_2,\dots,p_m):=\S^{2m-1}/\langle G\rangle$ is called an orbifold lens space. Also, if for all $t$ it is satisfied that $gcd(p_t,q)=1$, then $\langle G\rangle$ acts freely and $L_{q}(p_1,p_2,\dots,p_m)$ is a manifold, which is named a lens space.
\end{defi}

\begin{prop}
\label{impar}
Let $n$ be odd. Then $\mathscr{L}(n)$ is homeomorphic to $L_{2n}(n+2,n+4,\dots,2n-1)$ and therefore, $\mathscr{L}(n)$ is a manifold for $n$ prime.
\end{prop}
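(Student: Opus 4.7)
I would start from Theorem~\ref{quotient1}, which identifies $\mathscr{L}(n)$ with the quotient $\S^{n-2}/\langle\cR_n,\nu\rangle$, and then show this quotient is precisely the claimed lens space by expressing the group action in complex coordinates.

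Write $n=2m+1$ with $m=(n-1)/2$. By Definition~\ref{matrix}, for odd $n$ the matrix $\cR_n\in GL_{n-1}(\R)$ is block diagonal with blocks $R_{2\pi/n},R_{4\pi/n},\ldots,R_{2\pi m/n}$. Identify $\R^{n-1}=\R^{2m}$ with $\C^m$ so that the $j$-th $2\times 2$ rotation block becomes multiplication by $e^{2\pi ij/n}$ on the $j$-th complex coordinate. Under this identification, the action of $\cR_n$ on $\S^{n-2}\subset\C^m$ is
\[(z_1,\ldots,z_m)\longmapsto\big(e^{2\pi i/n}z_1,\,e^{4\pi i/n}z_2,\ldots,e^{2\pi im/n}z_m\big),\]
and $\nu=-I$ is simultaneous multiplication by $-1$ on every coordinate.

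Next I would analyze the group. $\cR_n$ has order $n$ (the lcm of the orders $n/\gcd(j,n)$ of its blocks) and $\nu$ has order $2$. For odd $n$, no power $\cR_n^k$ equals $-I$: this would require $2kj\equiv n\pmod{2n}$ in every block, with the left side even and the right side odd. Hence $\nu\notin\langle\cR_n\rangle$, and since $\cR_n$ and $\nu$ commute we have $\langle\cR_n,\nu\rangle\cong\Z/n\times\Z/2\cong\Z/2n$ because $\gcd(n,2)=1$. Therefore $\cR_n\nu$ is a single generator, and its action on the $j$-th coordinate is multiplication by $-e^{2\pi ij/n}=e^{2\pi i(n+2j)/(2n)}$ for $j=1,\ldots,m$. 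The exponents $n+2j$ sweep out $n+2,n+4,\ldots,2n-1$, and since $\gcd(n+2,2n)=1$ for odd $n$ (because $n+2$ is odd and coprime to $n$), the lens space definition applies and yields $\mathscr{L}(n)\cong L_{2n}(n+2,n+4,\ldots,2n-1)$.

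For the manifold assertion when $n$ is prime, I would verify that every exponent is coprime to $2n$: since $n+2j$ is odd we have $\gcd(n+2j,2)=1$; and $\gcd(n+2j,n)=\gcd(2j,n)=\gcd(j,n)=1$ because $1\le j\le(n-1)/2<n$ and $n$ is prime. Thus the cyclic group of order $2n$ acts freely on $\S^{n-2}$ and the quotient is a genuine lens space (a smooth manifold). I do not anticipate a serious obstacle; the one place where the oddness of $n$ is essential, and where I would be most careful, is the verification that $\nu\notin\langle\cR_n\rangle$, because this is exactly what promotes the product $\Z/n\times\Z/2$ to a single cyclic factor $\Z/2n$ with generator $\cR_n\nu$.
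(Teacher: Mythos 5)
Your proposal is correct and follows essentially the same route as the paper's proof: identify $\R^{n-1}$ with $\C^{(n-1)/2}$, observe that for odd $n$ the group $\langle\cR_n,\nu\rangle$ is cyclic of order $2n$ generated by $\nu\cR_n$, read off the exponents $n+2,n+4,\dots,2n-1$ from the action of this generator, and check the coprimality conditions for the (orbifold) lens space. Your only addition is a more explicit verification that $\nu\notin\langle\cR_n\rangle$, a step the paper passes over with the remark that $n$ and $2$ are coprime.
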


\begin{proof}Since $n$ and 2 are co-prime, then the group $\langle\cR_n,\nu\rangle$ is isomorphic to the cyclic group $\langle\nu\cR_n\rangle$. The identification $(x_1,x_2,\dots,x_{n-2},x_{n-1})\mapsto(x_1+ix_2,\dots,x_{n-2}+ix_{n-1})$ between $\R^{n-1}$ and $\C^{\frac{n-1}{2}}$, takes the generator $\nu\cR_n$ to the function
$$(z_1,z_2,\dots,z_{\frac{n-1}{2}})\mapsto\big(e^{2\pi i(\frac{n+2}{2n})}z_1,e^{2\pi i(\frac{n+4}{2n})}z_2,\dots,e^{2\pi i(\frac{2n-1}{2n})}z_{\frac{2n-1}{2}}\big).$$
Then $\mathscr{L}(n)=\S^{n-2}\big/\langle\cR_n,\nu\rangle=\S^{n-2}\big/\langle\nu\cR_n\rangle$ is homeomorphic to $L_{2n}(n+2,n+4,\dots,n+(n-1))$ (see \cite{NBEH}). Notice that for prime numbers $n\geq3$, $2n$ is co-prime with $n+2,n+4,\dots,2n-1$ and therefore, the quotient $L_{2n}(n+2,n+4,\dots,2n-1)$ is a lens space.
\end{proof}

\begin{exa}
\label{eje3}
The first three spaces of $n$-segments without labeled vertices are: From Example \ref{eje1}, $\mathscr{L}(3)\cong\S^1$. It turns out that $\mathscr{L}(4)\cong\S^2\big/\langle\cR_4,\nu\rangle$ is homeomorphic to the disc $\D^2=\{z\in\C\colon|z|\leq1\}$. By Proposition \ref{impar}, the space $\mathscr{L}(5)$ is diffeomorphic to the three-dimensional lens space $L_{10}(7,9)$.
\end{exa}

\subsection{Singular locus in $\mathscr{L}(n)$}

As we mentioned before, the singular locus in $\mathscr{L}(n)$ is the set of its singular points. When $n$ is a prime number, $\mathscr{L}(n)$ is a manifold and therefore its singular locus is empty, then we suppose that $n$ is a composite number. 

Since $$\mathscr{L}(n)  \cong \eta(\mathbb{L}(n))/\langle \mu\rangle \cong \mathbb{S}^{n-2}/\langle \mathcal{R}_{n},\nu\rangle$$ (see Theorem \ref{quotient1} and Definition \ref{withoutlabels}), the set of these singular points coincide with the quotient by $\langle \mu \rangle$ of the subset of $n$-segments in $\eta(\L(n))$ which are fixed by a non trivial element of $\langle \mu \rangle$, or equivalently, with the quotient by $\langle\cR_n,\nu\rangle$ of the set of points in $\S^{n-2}$ which are fixed by a non-trivial element of $\langle\cR_n,\nu\rangle$.

Let $X\in\eta(\L(n))$ and $j\in \mathbb{N}$. If $gcd(j,n)=g$, then $\mu^j(X)=X$ if and only if $\mu^g(X)=X$. Therefore it is only necessary to study the cases in which $j$ is a divisor of $n$. From now on we suppose that $n=jk$ with $0<j<n$. For the following, let $\ell^{j}(n):=\{X\in\S^{n-2}\colon\cR_n^j(X)=X\text{ or }\cR_n^j(X)=-X\}$.

\begin{thm}
\label{esferas} Let $j$ be a proper divisor of $n$, and $k=\frac{n}{j}$. Then 
 $\ell^{j}(n)$ is homemorphic to $\S^{j-2}$ for odd $k$ and to $\S^{j-2}\cup\S^{j-1}$ for even $k$.
\end{thm}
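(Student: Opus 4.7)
The plan is to identify $\ell^{j}(n)$ as the disjoint union of the unit spheres of the real $(+1)$- and $(-1)$-eigenspaces of $\mathcal{R}_{n}^{j}$ inside $\R^{n-1}$, and then read off their dimensions from the complex spectrum of $\mathcal{R}_{n}^{j}$.

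First I would set $V_{\pm}:=\ker(\mathcal{R}_{n}^{j}\mp I)\subseteq\R^{n-1}$. Since these are eigenspaces for distinct real eigenvalues, $V_{+}\cap V_{-}=\{\vec{0}\}$, so
\[
\ell^{j}(n)=(V_{+}\cup V_{-})\cap\S^{n-2}\cong\S^{\dim V_{+}-1}\sqcup\S^{\dim V_{-}-1},
\]
with the convention $\S^{-1}=\emptyset$. The statement therefore reduces to computing $\dim V_{\pm}$, and showing that the $\S^{j-1}$ component appears exactly when $k$ is even.

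Next, Proposition \ref{eigen} gives that $\mathcal{R}_{n}$ is conjugate to $\mathcal{M}$, so $\mathcal{R}_{n}^{j}$ is diagonalizable over $\C$ with eigenvalues $e^{2\pi i\ell j/n}=e^{2\pi i\ell/k}$ for $\ell=1,\dots,n-1$, using $n=jk$. Because $\mathcal{R}_{n}^{j}$ is a real operator, for any real eigenvalue $\lambda$ the complex $\lambda$-eigenspace is the complexification of the real one, so $\dim_{\R}V_{\lambda}$ equals the multiplicity of $\lambda$ in the complex spectrum. It then remains to count those multiplicities. The equation $e^{2\pi i\ell/k}=1$ with $1\le\ell\le jk-1$ forces $k\mid\ell$, producing $\ell\in\{k,2k,\dots,(j-1)k\}$, hence $\dim V_{+}=j-1$ in all cases. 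The equation $e^{2\pi i\ell/k}=-1$ is equivalent to $2\ell\equiv k\pmod{2k}$, which has integer solutions iff $k$ is even; when $k$ is even, the solutions are $\ell\in\{k/2,3k/2,\dots,(2j-1)k/2\}$, giving $\dim V_{-}=j$, while for odd $k$ one has $\dim V_{-}=0$.

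Combining these counts with the first step yields $\ell^{j}(n)\cong\S^{j-2}$ when $k$ is odd and $\ell^{j}(n)\cong\S^{j-2}\cup\S^{j-1}$ when $k$ is even, as claimed. The only real subtlety is justifying the identification $\dim_{\R}V_{\lambda}=$ multiplicity of $\lambda$ in the $\C$-spectrum, a standard fact for real diagonalizable operators that is worth a sentence because $\mathcal{R}_{n}$ was introduced only through its real block decomposition; everything else is elementary bookkeeping with $n$-th roots of unity, exploiting the collapse $n=jk$.
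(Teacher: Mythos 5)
Your proof is correct, and it rests on the same core decomposition as the paper's: write $\ell^{j}(n)$ as the disjoint union of the unit spheres in the $(+1)$- and $(-1)$-eigenspaces of $\mathcal{R}_{n}^{j}$, then compute the two dimensions. Where you genuinely diverge is in how those dimensions are obtained. The paper inspects the explicit block form of $\mathcal{R}_{n}^{j}$ from Definition \ref{matrix} and runs a four-way case analysis on the parities of $n$, $j$, and $k$, checking for each case which $2\times 2$ blocks $R_{m\cdot 2\pi/k}$ equal $\pm I$ and what sign the extra $(-1)^{j}$ entry contributes when $n$ is even. You instead pull the eigenvalues of $\mathcal{M}$ from Proposition \ref{eigen}, observe that $\mathcal{R}_{n}^{j}$ has complex spectrum $\{e^{2\pi i\ell/k}\}_{\ell=1}^{n-1}$, and count solutions of $e^{2\pi i\ell/k}=\pm 1$; this collapses all the parity cases into the single observation that $2\ell\equiv k\pmod{2k}$ is solvable iff $k$ is even, and it absorbs the odd-dimensional $(-1)^{j}$ entry for even $n$ without special treatment. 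Your route is shorter and less error-prone; its one cost is that it does not exhibit \emph{which} coordinate blocks span $\ell_{\pm}^{\,j}(n)$, information the paper reuses in the proof of Theorem \ref{locusing} (e.g.\ to identify the action of $\langle\mathcal{R}_{n}\rangle$ on the equator $S_{0}$ with specific rotation blocks). The only point worth a sentence in your write-up, as you already note, is that for a real diagonalizable operator the real dimension of $\ker(\mathcal{R}_{n}^{j}\mp I)$ equals the multiplicity of $\pm 1$ in the complex spectrum.
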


\begin{proof}
Supposing $n=jk$ as before, $\cR_n^j$ is conformed by the blocks $R_{1\cdot\frac{2\pi}{k}},R_{2\cdot\frac{2\pi}{k}},\dots,R_{\frac{n-1}{2}\cdot\frac{2\pi}{k}}$ for odd $n$ (see Definition \ref{matrix}); and by the blocks $R_{1\cdot\frac{2\pi}{k}}, R_{2\cdot\frac{2\pi}{k}},\dots,R_{\frac{n-2}{2}\cdot\frac{2\pi}{k}}$ and the entry $(-1)^{j}$ for even $n$. 

We study $\ell^{j}(n)$ as $\ell_-^{\,j}(n) \cup \ell_+^{\,j}(n)$ where $\ell_-^{\, j}(n):=\{X\in\S^{n-2}\colon \cR_n^j(X)=-X\}$ and $\ell_+^{\,j}(n):=\{X\in\S^{n-2}\colon \cR_n^j(X)=X\}$. 

 Thus, we look for when $R_{m \cdot\frac{2\pi}{k}}$ is the identity $2\times 2$ block in order to obtain the subspace of $\mathbb{R}^{n-1}$ such that $\mathcal{R}^{j}_{n}(X)=X$ and therefore the respective sphere $\ell_+^{\,j}(n)$; and we look for when $R_{m \cdot\frac{2\pi}{k}}$  is the negative of the identity $2\times 2$ blocks to obtain the subspace of $\mathbb{R}^{n-1}$ such that $\mathcal{R}^{j}_{n}(X)=-X$ and therefore the respective sphere $\ell_-^{\,j}(n)$. Also, for even $n$ we have to check the sign of $(-1)^{j}$.

If $n$ is odd, then $j$ and $k$ are odd. Hence, the blocks corresponding to $R_{m \cdot \frac{2 \pi}{k}}$ where $m\in \{k,2k\dots, \frac{(j-1)k}{2}\}$ conform the $2\times 2$ identity blocks in the diagonal of $\mathcal{R}^{j}_{n}$, and there are no $2\times 2$ blocks in the diagonal equal to the negative of the identity. In this case we conclude that $\ell_{+}^{\,j}(n)\cong \mathbb{S}^{j-2}$ and $\ell_{-}^{\,j}(n)=\emptyset$.

If $n$ is even and $k$ is odd, then $j$ is even. In this case, the blocks corresponding to $R_{m \cdot \frac{2 \pi}{k}}$ where $m\in \{k,2k\dots, \frac{(j-2)k}{2}\}$ conform the $2\times 2$ identity blocks in the diagonal of $\mathcal{R}^{j}_{n}$ and also the entry $(-1)^{j}$ is a $1$ in the diagonal of such matrix. Moreover, there are no $2\times 2$ blocks in the diagonal equal to the negative of the identity. For this case we also conclude that $\ell_{+}^{\,j}(n)\cong \mathbb{S}^{j-2}$ and $\ell_{-}^{\,j}(n)=\emptyset$.

If $n$ and $k$ are even and $j$ is odd, the blocks corresponding to $R_{m \cdot \frac{2 \pi}{k}}$ where $m\in \{k,2k\dots, \frac{(j-1)k}{2}\}$ conform the $2\times 2$ identity blocks in the diagonal of $\mathcal{R}^{j}_{n}$; and the $2\times 2$ blocks corresponding to $R_{m}\cdot \frac{2\pi}{k}$ where $m\in\{\frac{k}{2},\frac{3k}{2},\dots, \frac{(j-2)k}{2}\}$ conform the $2\times 2$ blocks that are equal to the negative identity, also the $(-1)^{j}$ entry is a $-1$ in the diagonal of such matrix. In this case we have that $\ell_{+}^{\,j}(n)\cong \mathbb{S}^{j-2}$ and $\ell_{-}^{\,j}(n)\cong\mathbb{S}^{j-1}$. 

If $n$, $k$ and $j$ are even, the blocks corresponding to $R_{m \cdot \frac{2 \pi}{k}}$ where $m\in \{k,2k\dots, \frac{(j-2)k}{2}\}$ conform the $2\times 2$ identity blocks in the diagonal of $\mathcal{R}^{j}_{n}$, and also the entry $(-1)^{j}$ is a $1$ in the diagonal of such matrix; and the $2\times 2$ blocks corresponding to $R_{m}\cdot \frac{2\pi}{k}$ where $m\in\{\frac{k}{2},\frac{3k}{2},\dots, \frac{(j-1)k}{2}\}$ conform the $2\times 2$ blocks that are equal to the negative identity in the diagonal of $\mathcal{R}_{n}^{j}$. In this case we have that $\ell_{+}^{\,j}(n)\cong \mathbb{S}^{j-2}$ and $\ell_{-}^{\,j}(n)\cong\mathbb{S}^{j-1}$.
\end{proof}

Given a set $A$, we denote the closed cone over $A$ by $\cC_A:=A\times[0,1]\big/\big(A\times\{1\}\big)$.

\begin{thm}
\label{locusing}
The quotient $\ell^j(n)\big/\langle\cR_n,\nu\rangle$ is homeomorphic to:
\begin{enumerate}
    \item $\mathscr{L}(j)$ for odd $k$.
    \item $\mathscr{L}(j)\cup L_{2j}(1,3,5,\dots,j-1)$ for even $k$ and even $j$.
    \item $\mathscr{L}(j)\cup\big( \cC_{L_j(1,3,\dots,j-2)}\big/\!\{([X],1)\sim([-X],1)\}\!\big)$, where $[X]\in L_j(1,3,\dots,j-2)$ denotes the class of $X\in \ell_{-}^{\,j}(n)$ with last coordinate equal to $0$, for even $k$ and odd $j$.
\end{enumerate}
\end{thm}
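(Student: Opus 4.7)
The plan is to decompose $\ell^j(n)=\ell_+^{\,j}(n)\sqcup\ell_-^{\,j}(n)$ as in the proof of Theorem \ref{esferas}; both pieces are invariant under $\langle\cR_n,\nu\rangle$ (since $\nu$ commutes with $\cR_n$), so the quotient splits as a disjoint union and I would analyze the two halves separately.

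For the $\ell_+^{\,j}$-piece—which is all of $\ell^{\,j}(n)$ in Case 1—the key point is that $\cR_n^j$ restricts to the identity on $\ell_+^{\,j}(n)$, so $\langle\cR_n,\nu\rangle$ acts through a proper quotient group. I would inspect the block structure from Definition \ref{matrix}: the surviving coordinates correspond exactly to the blocks $R_{m\cdot 2\pi/k}$ with $m\in\{k,2k,\ldots\}$ identified in the proof of Theorem \ref{esferas} (together with the $(-1)^j$ entry when it equals $+1$). On these coordinates $\cR_n$ acts as the blocks $R_{2\pi/j},R_{4\pi/j},\ldots$ (plus a $-1$ entry when applicable for even $j$), which is exactly the matrix $\cR_j$. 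Thus the induced action of $\langle\cR_n,\nu\rangle$ on $\ell_+^{\,j}(n)\cong\S^{j-2}$ coincides with that of $\langle\cR_j,\nu\rangle$, and Theorem \ref{quotient1} yields
\begin{equation*}
\ell_+^{\,j}(n)\big/\langle\cR_n,\nu\rangle \;\cong\; \S^{j-2}\big/\langle\cR_j,\nu\rangle \;\cong\; \mathscr{L}(j).
\end{equation*}

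For the $\ell_-^{\,j}$-piece (relevant in Cases 2 and 3), the observation is that $\cR_n^j\equiv -I\equiv\nu$ on $\ell_-^{\,j}(n)$ by definition, so $\langle\cR_n,\nu\rangle$ acts via the cyclic subgroup $\langle\cR_n\rangle$ of effective order $2j$. In Case 2 (even $j$, even $k$), the surviving coordinates form $\C^{j/2}$ and $\cR_n$ becomes the diagonal multiplication by $\bigl(e^{2\pi i/(2j)},e^{2\pi i\cdot 3/(2j)},\ldots,e^{2\pi i(j-1)/(2j)}\bigr)$, which gives the orbifold lens space $L_{2j}(1,3,\ldots,j-1)$ directly from the definition. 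In Case 3 (odd $j$, even $k$), $\cR_n$ additionally negates a real coordinate $t$, so I would quotient in two stages: first by $\langle\cR_n^2\rangle$, which fixes $t$ and acts on the equator $\{t=0\}\cap\S^{j-1}\cong\S^{j-2}$ as the order-$j$ diagonal $\mathrm{diag}\bigl(e^{2\pi i(2s-1)/j}\bigr)$, producing the suspension $\Sigma L_j(1,3,\ldots,j-2)$; then by the residual $\Z_2$ generated by $\cR_n$, which swaps the two hemispheres $\{t>0\}$ and $\{t<0\}$. Using the closed upper hemisphere as fundamental domain collapses the suspension into a cone $\cC_{L_j(1,3,\ldots,j-2)}$ with a residual identification on its base.

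The main obstacle is this odd-$j$ case: I must verify that the residual $\Z_2$-action on the base lens space $L_j(1,3,\ldots,j-2)$ is exactly the antipodal involution $[X]\mapsto[-X]$, which is what yields the boundary identification stated in the theorem. This will follow from the facts that $\cR_n^j=-I$ on the equator (by definition of $\ell_-^{\,j}$) and that, for odd $j$, $\cR_n$ and $\cR_n^j$ lie in the same coset of $\langle\cR_n^2\rangle$ in $\langle\cR_n\rangle$ (since $\cR_n^{j-1}\in\langle\cR_n^2\rangle$), so both induce the same involution on the quotient, namely the antipodal one.
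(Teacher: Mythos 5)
Your proposal is correct and follows essentially the same route as the paper: split $\ell^j(n)$ into $\ell_+^{\,j}(n)$ and $\ell_-^{\,j}(n)$, identify the restricted action on $\ell_+^{\,j}(n)$ with that of $\langle\cR_j,\nu\rangle$ to get $\mathscr{L}(j)$, pass to the cyclic group $\langle\cR_n\rangle$ of order $2j$ on $\ell_-^{\,j}(n)$, and in the odd-$j$ case reduce to the antipodal identification on the base lens space via $\cR_n\equiv\cR_n^j=-I\pmod{\langle\cR_n^2\rangle}$. The only cosmetic difference is that you organize the odd-$j$ case as a two-stage quotient (suspension of $L_j(1,3,\dots,j-2)$, then the residual involution swapping hemispheres) whereas the paper analyzes the slices $S_r$ of constant last coordinate one at a time; the key computations are identical.
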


\begin{proof}The action of $\langle\cR_n,\nu\rangle$ in the sphere $\ell_{+}^{\,j}(n)\cong\S^{j-2}$ acts as the action of $\langle\cR_j,\nu\rangle$ (see Theorem \ref{esferas}). Then the corresponding quotient is $\ell_{+}^{\,j}(n)/\langle\mathcal{R}_{n},\nu\rangle\cong\mathscr{L}(j)$ (see Theorem \ref{quotient1}).

For even $k$ we have that $\ell_{-}^{\,j}(n)$ is a copy of $\S^{j-1}$. Since $\mathcal{R}^{j}_{n}=\nu$ restricted to $\ell_{-}^{\,j}(n)$, we have that $\ell_{-}^{\,j}(n)/\langle\mathcal{R}_{n},\nu\rangle\cong \ell_{-}^{\,j}(n)/\langle\mathcal{R}_{n}\rangle$. Notice that $\langle\cR_n\rangle$ restricted to $\ell_{-}^{\,j}(n)$ has order $2j$. We consider two cases:

\noindent
$\bullet~~$\textit{Case 1, j is even}.

If we identify the real subspace of dimension $j$ that contains $\ell_{-}^{\,j}(n)
$ with $\C^{\frac{j}{2}}$, the action of $\langle\cR_n\rangle$ corresponds to the action $$(z_1,z_2,\cdots,z_{\frac{j}{2}})\mapsto\big(e^{2\pi i(\frac{1}{2j})}z_1,e^{2\pi i(\frac{3}{2j})}z_2,\cdots,e^{2\pi i(\frac{j-1}{2j})}z_{\frac{j}{2}}\big).$$ Then $\ell_{-}^{\,j}(n)/\langle \mathcal{R}_{n}\rangle \cong L_{2j}(1,3,5,\dots,j-1)$ (see Proposition \ref{impar}).

\noindent
$\bullet~~$\textit{Case 2, if j is odd}.

Since $\cR_n^j=\nu$, then the upper hemisphere has representatives of any class of $\ell_{-}^{\,j}(n)/\langle\mathcal{R}_{n}\rangle$. As shown in the proof of Theorem \ref{esferas}, the $(n-1,n-1)$ entry of the matrix $\cR_n$ is $-1$. Let $S_r\subset \ell_{-}^{\,j}(n)$ be the set of points with the last coordinate equal to $r\in[0,1]$. There are three cases:

\begin{enumerate}
    \item The blocks of $\mathcal{R}_{n}$ that act in the equator $S_0$ corresponds to the blocks $R_{m\cdot\frac{2\pi}{k}}$ for $m\in \{k,3k,\dots,\frac{(j-2)k}{2}\}$ (see the proof of Theorem \ref{esferas}), then $S_0\big/\!\langle\cR_n\rangle$ is homeomorphic to the space $L_{2j}(1,3,\dots,j-2)$. 
    
    \item If $0<r<1$, the action of $\langle\cR_n\rangle$ in $S_r$ is determined by the action of $\langle\cR^2_n\rangle$ (the odd powers of $\cR_n$ send $S_r$ in $-S_r$, which is contained in the lower hemisphere of $\S^{j-1}$), then $S_r\big/\!\langle\cR_n\rangle=S_r\big/\!\langle\cR^2_n\rangle$ is homeomorphic to $L_j(1,3,\dots,j-2)$ (see the proof of Theorem \ref{esferas}). 

    \item Since $S_1=\{(0,0,\dots,0,1)\}$, then $S_{1}/\langle \mathcal{R}_{n}\rangle$ is a point.
\end{enumerate}

For any $m$, $2m=j+l$ for some odd $l$, then $\cR_n^{2m}(X)=\cR_n^{j+l}(X)=-\cR_n^l(X)$. Therefore, $L_{2j}(1,3,\dots,j-2)$ is homeomorphic to $L_j(1,3,\dots,j-2)\big/\{[X]\sim[-X]\}$. We conclude that $\ell_{-}^{\,j}(n) \big/\langle\cR_n\rangle$ is homeomorphic to $ \cC_{L_j(1,3,\dots,j-2)}\big/\!\{([X],0)\sim([-X],0)\}$.\end{proof}

\vspace{-0.3cm}
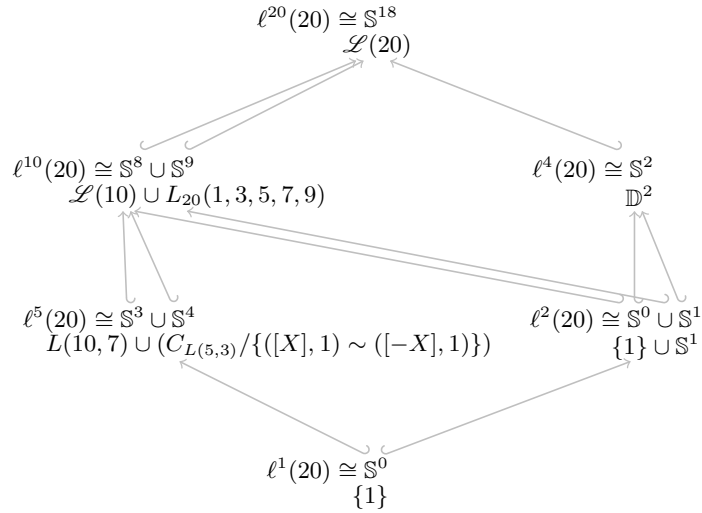
\begin{figure}[h]
\captionsetup{width=.8\linewidth}
\begin{center}\begin{tikzpicture}
\draw [<-right hook,semithick,gray!50] (-1.65,-0.55)--(-4.5,-1.75);
\draw [<-right hook,semithick,gray!50] (-1.55,-0.55)--(-3.9,-1.75);
\draw [<-left hook,semithick,gray!50] (-1.2,-0.55)--(1.86,-1.75);
\draw [<-left hook,semithick,gray!50] (2.05,-2.55)--(2.05,-3.78);
\draw [<-left hook,semithick,gray!50] (2.15,-2.55)--(2.65,-3.78);
\draw [<-left hook,semithick,gray!50] (-4.6,-2.55)--(1.9,-3.78);
\draw [<-left hook,semithick,gray!50] (-4.65,-2.55)--(-4.1,-3.78);
\draw [<-left hook,semithick,gray!50] (-4.75,-2.55)--(-4.7,-3.78);
\draw [<-left hook,semithick,gray!50] (-3.9,-2.55)--(2.5,-3.78);
\draw [<-left hook,semithick,gray!50] (-4,-4.55)--(-1.5,-5.8);
\draw [<-right hook,semithick,gray!50] (2,-4.55)--(-1.3,-5.8);

\draw (-2.03,0) node {\footnotesize $\ell^{20}(20)\cong \mathbb{S}^{18}$};
\draw (-1.37,-0.35) node {\footnotesize$\mathscr{L}(20)$};
\draw (-5,-2) node {\footnotesize$\ell^{10}(20)\cong \mathbb{S}^{8}\cup\mathbb{S}^{9}$};
\draw (-3.76,-2.35) node {\footnotesize$\mathscr{L}(10)\cup L_{20}(1,3,5,7,9)$};
\draw (1.5,-2) node {\footnotesize$\ell^{4}(20)\cong \mathbb{S}^{2}$};
\draw (2.12,-2.35) node {\footnotesize$\mathbb{D}^{2}$};
\draw (-4.93,-4) node {\footnotesize$\ell^{5}(20)\cong \mathbb{S}^{3}\cup\mathbb{S}^{4}$};
\draw (-2.81,-4.35) node {\footnotesize$L(10,7)\cup(C_{L(5,3)}/\{([X],1)\sim([-X],1)\})$};
\draw (1.81,-4) node {\footnotesize$\ell^{2}(20)\cong \mathbb{S}^{0}\cup \mathbb{S}^{1}$};
\draw (2.33,-4.35) node {\footnotesize$\{1\}\cup \mathbb{S}^{1}$};
\draw (-2.03,-6) node {\footnotesize $\ell^{1}(20)\cong \mathbb{S}^{0}$};
\draw (-1.46,-6.35) node {\footnotesize$\{1\}$};

\end{tikzpicture}\end{center}
\vspace{-0.3cm}
\caption{\footnotesize The sets of $20$-segments fixed by powers $j$ that are divisors of $20$. The first row in each node of the diagram shows the space $\ell^{j}(20)$, the second row shows the quotient $\ell^{j}(20)\big/\langle\cR_n,\nu\rangle$, and the lines indicate where each space is contained. We use that $L_5(1,3)=L(5,3)$, $\mathscr{L}(4)=\D^2$ and $\mathscr{L}(5)=L(10,7)$ (see Example \ref{eje3}).} 
\label{Figure5}
\end{figure}

Notice that if $m$ is a divisor of $j$, then $\ell^m(n)\subset\ell^j(n)$ and  $\ell^m(n)\big/\langle\cR_n,\nu\rangle\subset\ell^j(n)\big/\langle\cR_n,\nu\rangle$. Figure \ref{Figure5} shows the diagram of these spaces in the case $n=20$.

\section*{Acknowledgments}

This work was funded by CONAHCYT.

\bibliographystyle{amsalpha}

\bibliography{Bibliografia}

\end{document}